\newtheorem{theorem}[equation]{Theorem}
\newtheorem{theorem-definition}[equation]{Theorem-Definition}
\newtheorem{lemma-definition}[equation]{Lemma-Definition}
\newtheorem{definition-prop}[equation]{Proposition-Definition}
\newtheorem{corollary}[equation]{Corollary}
\newtheorem{prop}[equation]{Proposition}
\newtheorem{lemma}[equation]{Lemma}
\theoremstyle{definition}
\theoremstyle{definition}
\newtheorem{example}[equation]{Example}
\newtheorem{remark}[equation]{Remark}
\newcommand{\N}{\ensuremath{\mathbb{N}}}
\newcommand{\Z}{\ensuremath{\mathbb{Z}}}
\newcommand{\Q}{\ensuremath{\mathbb{Q}}}
\newcommand{\R}{\ensuremath{\mathbb{R}}}
\newcommand{\HH}{\ensuremath{\mathbb{H}}}
\newcommand{\cX}{\ensuremath{\mathscr{X}}}
\newcommand{\cC}{\ensuremath{\mathscr{C}}}
\newcommand{\cD}{\ensuremath{\mathscr{D}}}
\renewcommand{\R}{\ensuremath{\mathbb{R}}}
\newcommand{\Spec}{\ensuremath{\mathrm{Spec}\,}}
\newcommand{\red}{\mathrm{red}}
\newcommand{\lcm}{\mathrm{lcm}}
\newcommand{\wt}{\mathrm{wt}}
\newcommand{\Sk}{\mathrm{Sk}}
\newcommand{\an}{\mathrm{an}}
\numberwithin{equation}{subsection}
\newcommand{\sss}{\vspace{5pt} \subsubsection*{ }\refstepcounter{equation}{{\bfseries(\theequation)}\ }}
\begin{document}
\title{Weight functions on Berkovich curves}

\author[Matthew Baker]{Matthew Baker}
\address{School of Mathematics\\Georgia Institute of Technology\\Atlanta, GA 30332-0160\\USA} \email{mbaker@math.gatech.edu}

\author[Johannes Nicaise]{Johannes Nicaise}
\address{KU Leuven\\ Department of Mathematics\\ Celestijnenlaan 200B\\ 3001 Heverlee, Belgium
 \\
{\em Current address:} Imperial College\\
Department of Mathematics\\ South Kensington Campus \\
London SW7 2AZ, UK} \email{j.nicaise@imperial.ac.uk}

\begin{abstract}
 Let $C$ be a curve over a complete discretely valued field $K$. We give tropical descriptions of the weight function attached to a pluricanonical form on $C$ and the essential skeleton of $C$. We show that the Laplacian of the weight function equals the pluricanonical divisor on Berkovich skeleta, and we describe the essential skeleton of $C$
    as a combinatorial skeleton of the Berkovich skeleton of the minimal $snc$-model. In particular, if $C$ has semi-stable reduction, then the essential skeleton coincides with the minimal skeleton. As an intermediate step, we describe the base loci of logarithmic pluricanonical line bundles on minimal $snc$-models.
\end{abstract}

\maketitle

\section{Introduction}
 We denote by $R$ a complete discrete valuation ring with
quotient field $K$ and algebraically closed residue field $k$. Let $X$ be a smooth and proper $K$-variety. In \cite{MuNi}, Musta\c{t}\u{a} and the second author defined the {\em essential skeleton} $\Sk(X)$ of $X$, which is a finite simplicial complex embedded in the Berkovich analytification $X^{\an}$ of $X$. It is a union of faces of the Berkovich skeleton of any strict normal crossings model of $X$, but it does not depend on the choice of such a model.
 It was proven in \cite{NiXu} that, when $k$ has characteristic zero and the canonical line bundle on $X$ is semi-ample, the essential skeleton is a strong deformation retract of $X^{\an}$ and can be identified with the dual intersection complex of the special fiber of any minimal $dlt$-model of $X$ over $R$.
 The definition of the essential skeleton was based on the construction of a {\em weight function} $\wt_{\omega}$ on $X^{\an}$ attached to a pluricanonical form $\omega$ on $X$, which measures the degeneration of the pair $(X,\omega)$ locally at a point of $X^{\an}$. The aim of the present paper is to give an explicit description of the weight function and the essential skeleton in the case where $X$ is a curve, and to relate them to potential theory on graphs.

\medskip

 Let $C$ be a smooth, proper, geometrically connected curve over $K$. Denote by $\mathbb{H}_0(C)$ the Berkovich analytifcation $C^{\an}$ minus the points of type I and IV. In Section \ref{sec-metric}, we construct a metric on $\mathbb{H}_0(C)$ using the geometry of normal crossings models of $C$ over $R$. This is similar to the construction of the skeletal metric in the case where $K$ is algebraically closed \cite{BPR}, but our metric is not invariant under base change and cannot be obtained from the skeletal metric in any direct way. Using this metric, we can speak of integral affine functions on finite subgraphs of $C^{\an}$ and Laplacians of such functions.  Section \ref{sec-weight} is the heart of the paper: here we provide combinatorial descriptions of the weight function $\wt_{\omega}$ attached to a rational $m$-canonical form $\omega$ on $C$ and of the essential skeleton of $C$. Our first main result, Theorem \ref{thm:laplacian}, states that the Laplacian of the restriction of the weight function to the Berkovich skeleton of a suitable $snc$-model of $C$ equals the $m$-canonical divisor of the Berkovich skeleton, which is defined in terms of graph theory. Our second main result, Theorem \ref{thm:essentialsk}, states that the essential skeleton of a curve $C$ of positive genus is the subgraph of the Berkovich skeleton of the minimal $snc$-model of $C$ obtained by contracting all the tails of rational curves. In particular, if $C$ has semi-stable reduction, then the essential skeleton of $C$ is equal to the Berkovich skeleton of its minimal $snc$-model. The proof of Theorem \ref{thm:essentialsk} is based on Theorem \ref{thm:semiample}, which describes the base locus of the logarithmic relative pluricanonical bundles of the minimal $snc$-model of $C$. We also prove that, in the semi-stable reduction case, it suffices to look at weight functions of $2$-canonical forms to recover the essential skeleton; moreover, if the  essential skeleton of $C$ is bridgeless, then canonical forms suffice (see Theorem \ref{thm:nonbridgeessentialsk}).
 Finally, in Section \ref{sec:appendix}, we describe a different natural metric on $\mathbb{H}_0(C)$ which behaves better under (tame) base change and which is closer to the skeletal metric from \cite{BPR}.

\subsection{Acknowledgements}
 The authors would like to thank Mattias Jonnson for helpful comments on an earlier version of this text.
 The second author was supported by the ERC Starting Grant MOTZETA (project 306610) of the European Research Council.

\subsection{Notation}
\sss We denote by $R$ a complete discrete valuation ring with
quotient field $K$ and algebraically closed residue field $k$. We assume that the valuation $v_K$ on $K$ is normalized, i.e., that $v_K(t)=1$ for any uniformizer $t$ in $R$, and we define an absolute value $|\cdot|_K$ on $K$ by setting $|a|_K=\exp(-v_K(a))$ for every $a$ in $K^{\ast}$. We fix an algebraic closure $K^a$ of $K$. The absolute value $|\cdot|_K$ extends uniquely to an absolute value on $K^a$, which we still denote by $|\cdot|_K$. We write $\widehat{K^a}$ for the completion of $K^a$ with respect to $|\cdot|_K$.

\sss By a curve over $K$, we will mean a geometrically connected smooth proper $K$-variety of dimension one. For every scheme $S$ we denote by $S_{\red}$ the maximal reduced closed subscheme. For every $R$-scheme $\cX$ we set $\cX_K=\cX\times_R K$ and $\cX_k=\cX\times_R k$. If $\mathcal{L}$ is a line bundle on a scheme $X$ and $D$ is a Cartier divisor on $X$, then we write $\mathcal{L}(D)$ for the line bundle $\mathcal{L}\otimes\mathcal{O}_X(D)$, as usual.

\sss We will work with the category of $K$-analytic spaces as defined by Berkovich \cite{berkbook}. We assume a basic familiarity with the theory of analytic curves over $K$; see for instance \cite{BPR}.

\section{The metric on the Berkovich analytification of a $K$-curve}\label{sec-metric}
\subsection{Metric graphs associated to curves with normal crossings}
\sss When we speak of a discrete graph $G$, we mean a finite connected undirected
 multigraph, i.e., we allow multiple loops and multiple edges between vertices.
   We denote the vertex set of $G$ by $V(G)$ and the set of
edges by $E(G)$. A weighted discrete graph is a couple $(G,w)$ where $G$ is a discrete graph and $w$ is a function
$$w:V(G)\to \R.$$

\sss A discrete graph $G$ has a geometric realization $\Gamma$,
which is a defined as follows: we start from the set $V(G)$
 and we attach one copy of the closed interval $[0,1]$ between two
vertices $v_1$ and $v_2$ for each edge of $G$ with endpoints $\{v_1,v_2\}$.
 If $G$ is endowed with a weight function $w$ that takes values in $\Z_{>0}$, then we can turn the topological space
 $\Gamma$ into a metric space by declaring that the length of every edge $e$ between two adjacent vertices
 $v_1$ and $v_2$ is equal to
 \begin{equation}\label{def:metric}
 \ell(e)=\frac{1}{w(v_1)\cdot w(v_2)}.
 \end{equation}
 In these definitions, we allow the possibility that $v_1=v_2$.
 We call the metric space $\Gamma$ the metric graph associated with $(G,w)$.

\sss Let $X$ be a connected separated $k$-scheme of finite type, of
pure dimension one. We say that $X$ has normal crossings if the
 only
singular points of $X_{\red}$ are ordinary
double points. We associate a weighted discrete graph $(G(X),w)$ to $X$ as
 follows.
 The vertex set of $G(X)$ is the set of irreducible
components of $X$ and the edge set of $G(X)$
is the set of singular points of $X_{\red}$. If $e$ is an edge
corresponding to a singular point $x$ of $X_{\red}$, then the end points
of $e$ are the vertices corresponding to the irreducible
components of $X$ containing $x$. In particular, $e$ is a loop if
and only if $x$ is a singular point of an irreducible component of
$X$. If $v$ is a vertex of $G(X)$ corresponding to an irreducible component $E$ of $X$, then
 the weight $w(v)$ is defined to be the multiplicity of $X$ along $E$, i.e., the length of the local ring of $X$ at the generic point of $E$. The metric graph associated with
 $(G(X),w)$ will be denoted by $\Gamma(X)$.

\subsection{Models with normal crossings}
\sss Let $C$ be a curve over $K$.
 An $nc$-model of $C$ is a regular flat proper $R$-scheme $\cC$, endowed with an isomorphism of $K$-schemes $\cC_K\to C$, such that the special fiber
 $\cC_k$ has normal crossings. We call $\cC$ an $snc$-model of $C$ if, moreover, $\cC_k$ has strict normal crossings, which means that its irreducible components (endowed with the induced reduced structure) are regular.  If $\cC$ and $\cC'$ are $nc$-models of $C$, then a morphism of $R$-schemes $h:\cC'\to \cC$ is called a morphism of $nc$-models if the morphism $h_K:\cC'_K\to \cC_K$ obtained by base change to $K$ commutes with the isomorphisms to $C$. Morphisms of $snc$-models are defined analogously.
  We say that $\cC'$ dominates $\cC$ if there exists a morphism of $nc$-models $\cC'\to \cC$; such a morphism is automatically unique. We denote this property by $\cC'\geq \cC$. The relation $\geq$ defines a partial ordering on the set of isomorphism classes of $nc$-models of $C$. This partial ordering
  is filtered, and the $snc$-models form a cofinal subset since any $nc$-model can be transformed into an $snc$-model by blowing up at the self-intersection points of the irreducible components of the special fiber.
 We say that the curve $C$ has semi-stable reduction if any relatively minimal $nc$-model of $C$ has a reduced special fiber. Beware that this does not imply that the minimal $snc$-model has reduced special fiber, as blowing up at self-intersection points introduces components of multiplicity two.

\sss
Denote by $C^{\an}$ the Berkovich analytification of $C$, and let $\cC$ be an $snc$-model of $C$. If $E$ is an irreducible component of $\cC_k$ and $v$ denotes the corresponding vertex of the weighted discrete graph $(G(\cC_k),w)$, then $w(v)$ is precisely the multiplicity of $E$ in the divisor $\cC_k$.  In \cite[\S3.1]{MuNi}, Musta\c{t}\u{a} and the second author
 defined a canonical topological embedding of the metric graph $\Gamma(\cC_k)$ into $C^{\an}$, generalizing a construction by Berkovich. The image of this embedding is called the Berkovich skeleton
of the model $\cC$ and denoted by $\Sk(\cC)$. By \cite[3.1.5]{MuNi}, the embedding of $\Sk(\cC)$ into $C^{\an}$ has a canonical continuous retraction $$\rho_{\cC}:C^{\an}\to \Sk(\cC).$$ If we let $\cC$ vary over the class of $snc$-models of $C$, ordered by the domination relation, then the maps $\rho_{\cC}$ induce a homeomorphism $$C^{\an}\to \lim_{\stackrel{\longleftarrow}{\cC}}\Sk(\cC).$$
 This is easily proven by an adaptation of the argument in \cite[5.2]{BPR} (where it is assumed that the base field is algebraically closed).
 It is straightforward to generalize these constructions to $nc$-models, either by copying the arguments or by observing
 that blowing up $\cC$ at all the self-intersection points of irreducible components of $\cC_k$, we get an $snc$-model $\cC'$ of $C$ and the morphism $\cC'\to \cC$
 induces an isometry $\Gamma(\cC'_k)\to \Gamma(\cC_k)$ (the effect of this operation on $\Gamma(\cC_k)$ is that we add a vertex in the middle of every loop).

\subsection{Definition of the metric}\label{ss:def-metric}
\sss Let $C$ be a curve over $K$, and denote by $\HH_0(C)$
  the subset of $C^{\an}$ obtained by removing the points of type I and IV.

\begin{lemma}\label{lemm:union}
  For every $nc$-model $\cC$ of $C$, the Berkovich skeleton $\Sk(\cC)$ is contained in $\HH_0(C)$. Moreover, $\HH_0(C)$ is the union of the
  skeleta $\Sk(\cC)$ where $\cC$ runs through any cofinal set of $nc$-models of $\cC$.
\end{lemma}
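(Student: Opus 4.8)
The plan is to establish the two assertions separately, using the explicit description of the embedding $\Gamma(\cC_k)\hookrightarrow C^{\an}$ from \cite[\S3.1]{MuNi} together with the homeomorphism $C^{\an}\to\varprojlim_{\cC}\Sk(\cC)$ recalled just above. For the first assertion I would recall that every point of $\Sk(\cC)$ is a (quasi-)monomial point attached to the snc-divisor $\cC_k$: each vertex is the divisorial point $\ord_E$ associated with an irreducible component $E$ of $\cC_k$, while the interior of the edge attached to a node lying on two components is realized by the monomial points in the two local parameters at that node. By the Berkovich classification of points on a curve, a divisorial point is of type II, and a monomial point is of type II or III according as its weight is rational or irrational; in particular, no point of $\Sk(\cC)$ is of type I or IV, so $\Sk(\cC)\subseteq\HH_0(C)$.

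For the second assertion, the inclusion $\bigcup_{\cC}\Sk(\cC)\subseteq\HH_0(C)$ is immediate from the first part. For the reverse inclusion I would first record the monotonicity $\Sk(\cC)\subseteq\Sk(\cC')$ whenever $\cC'\geq \cC$: the retractions are compatible, $\rho_{\cC}=r_{\cC'\cC}\circ\rho_{\cC'}$ for the induced retraction $r_{\cC'\cC}\colon\Sk(\cC')\to\Sk(\cC)$, so the skeleta only grow under domination. Since the $snc$-models (and hence any cofinal family) are cofinal among $nc$-models, this reduces the statement to showing that each $x\in\HH_0(C)$ lies on $\Sk(\cC)$ for \emph{at least one} $nc$-model $\cC$; equivalently, via the inverse-limit homeomorphism, that $\rho_{\cC}(x)=x$ for some such $\cC$.

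It then remains to treat the two types of points in $\HH_0(C)$. If $x$ is of type II it is a divisorial point, i.e. $x=\ord_E$ for a prime divisor $E$ on some normal proper $R$-model of $C$; dominating this model by an $nc$-model $\cC$ in which $E$ (or its strict transform) appears as a component of $\cC_k$ exhibits $x$ as the corresponding vertex of $\Sk(\cC)$. If $x$ is of type III it has irrational weight and is never divisorial, but it lies on the segment joining two divisorial points; realizing the relevant node on a suitable $nc$-model $\cC$ then displays $x$ as an interior point of an edge of $\Sk(\cC)$. In both cases $x\in\Sk(\cC)$, and by the monotonicity above $x$ lies on a skeleton in any cofinal family, which completes the proof.

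I expect the main obstacle to be the realization step for type II points: one must know that every divisorial valuation on $C$ is the valuation attached to a component of the special fiber of some $nc$-model, which rests on resolution of singularities for excellent two-dimensional schemes and on the cofinality of $snc$-models recalled in Section~\ref{sec-metric}. The type III case is then comparatively formal, since an irrational monomial point automatically sits in the interior of an edge and cannot be turned into a vertex by further blow-ups, consistently with the fact that its value group is non-discrete.
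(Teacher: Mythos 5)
Your proposal is correct, but it takes a more explicit, pointwise route than the paper, whose entire proof of the second assertion is the remark that the union of the skeleta is connected and contains all type II points of $C^{\an}$ (citing \cite{MuNi} for the facts that type II points are exactly the divisorial points and that skeleta grow under domination). You instead realize each point of $\HH_0(C)$ individually on the skeleton of a single model: type II points via divisoriality together with resolution of singularities for two-dimensional excellent schemes and cofinality of $snc$-models, and type III points as interior points of edges via the monomial/node description. Both routes rest on the same essential input (type II $=$ divisorial), but your version makes the resolution input and the vertex-versus-edge-interior dichotomy visible, whereas the paper compresses the type III case into the connectivity remark. In fact your route is arguably the more airtight one: taken literally, ``connected and contains all type II points'' does not by itself force containment of type III points (for a Tate curve, $\HH_0(C)$ minus a type III point on the loop of the skeleton is still connected and contains every type II point), so the paper's argument implicitly also uses that skeleta are arc-closed — each connected component of $C^{\an}\setminus\Sk(\cC)$ is an open disc with a single boundary point, so an injective path between two points of $\Sk(\cC)$ cannot leave $\Sk(\cC)$. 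This same observation is the clean way to repair the one loose phrase in your type III step: ``realizing the relevant node'' should either be replaced by the arc argument just described (choose a model whose skeleton contains two divisorial points on the two branches at $x$; then the arc through $x$ joining them lies in the skeleton, so $x$ does too), or be justified by quoting the characterization of type II and III points as precisely the monomial points, which the paper records at the start of Section \ref{sec-weight} with a reference to \cite{MuNi}. With that point tightened, your argument is complete and slightly more self-contained than the paper's.
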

\begin{proof}
The first part of the statement is obvious from the construction of $\Sk(\cC)$. The second
 part follows from the fact that this union is connected and contains all type II points of $C^{\an}$, by \cite[2.4.11 and 3.1.7]{MuNi}.
\end{proof}

 The following theorem explains how one can define a natural
 metric on the set $\HH_0(C)$.

\begin{theorem}\label{thm:metric}
There exists a unique metric on $\HH_0(C)$ such that, for every $nc$-model $\cC$ of $C$, the map
$$\Gamma(\cC_k)\to \HH_0(C)$$ is an isometric embedding.
\end{theorem}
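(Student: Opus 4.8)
The plan is to transport the metric on $\HH_0(C)$ from the metric graphs $\Gamma(\cC_k)$ and to verify that the result is consistent. By Lemma \ref{lemm:union} every point of $\HH_0(C)$ lies in some skeleton $\Sk(\cC)$, and since the domination relation is filtered any finite collection of points lies in a single skeleton. For $x,y\in\HH_0(C)$ I would choose an $nc$-model $\cC$ with $x,y\in\Sk(\cC)$ and declare $d(x,y)$ to be the distance between the corresponding points of $\Gamma(\cC_k)$. Once this is shown to be independent of the choice of $\cC$, everything else is formal: the metric axioms are inherited from the metric graphs by passing, for any given triple of points, to a common model; the isometric embedding property holds by construction; and uniqueness is immediate, since any metric with the stated property must agree with the metric of $\Gamma(\cC_k)$ on each $\Sk(\cC)$, and these skeleta cover $\HH_0(C)$.

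Thus the whole content is the well-definedness, which amounts to the following compatibility statement: whenever $\cC'\geq\cC$, the inclusion $\Sk(\cC)\subseteq\Sk(\cC')$ in $C^{\an}$ identifies $\Gamma(\cC_k)$ isometrically with a subspace of $\Gamma(\cC'_k)$; given two models both containing $x$ and $y$, a common refinement together with the compatibility statement then forces the two computed distances to coincide. To prove the compatibility statement I would factor the morphism $h\colon\cC'\to\cC$ — a proper birational morphism of regular arithmetic surfaces — into a finite sequence of blow-ups at closed points. Since $h$ is an isomorphism on generic fibers all centers lie on the special fiber, and because the normal crossings hypothesis guarantees that every closed point of a normal crossings special fiber is either a smooth point or a node, each blow-up is automatically centered at a smooth point or a node and yields again an $nc$-model. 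It therefore suffices to treat a single blow-up of each type.

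The core of the proof is then the length bookkeeping for these two blow-ups, which is exactly where the defining formula $\ell(e)=1/(w(v_1)w(v_2))$ intervenes. Blowing up a node at which components of multiplicities $m_1$ and $m_2$ meet produces an exceptional component of multiplicity $m_1+m_2$, which combinatorially inserts a vertex $v'$ of weight $m_1+m_2$ inside the corresponding edge; the identity
\[
\frac{1}{m_1(m_1+m_2)}+\frac{1}{(m_1+m_2)m_2}=\frac{1}{m_1m_2}
\]
(which also covers a loop, where $m_1=m_2$ and the single edge splits into two) shows that the edge is merely subdivided, so distances are preserved. Blowing up a smooth point lying on a component of multiplicity $m$ produces an exceptional component again of multiplicity $m$, and combinatorially attaches a new leaf edge of length $1/m^2$, leaving all distances between points of the old skeleton unchanged. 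In both cases $\Sk(\cC)\to\Sk(\cC')$ is isometric, as required.

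I expect the main obstacle to lie not in this arithmetic but in the geometric input feeding it: one must know that the set-theoretic inclusion $\Sk(\cC)\subseteq\Sk(\cC')$ holds and that, under it, the combinatorial refinement $\Gamma(\cC_k)\hookrightarrow\Gamma(\cC'_k)$ is realized geometrically — i.e.\ that the inserted vertex $v'$ really is the divisorial point attached to the exceptional component and that it lies on the segment joining $v_1$ and $v_2$ in $C^{\an}$. These facts should follow from the construction of $\Sk(\cC)$ and of the embedding $\Gamma(\cC_k)\to C^{\an}$ in \cite[\S3.1]{MuNi}, but verifying their compatibility with the blow-up description is the delicate point; granting them, the length identities above close the argument.
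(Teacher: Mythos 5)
Your proposal is correct and takes essentially the same route as the paper: uniqueness via Lemma \ref{lemm:union}, and existence by factoring a dominating morphism of $nc$-models into point blow-ups and checking edge lengths in the two cases (trivial at a regular point of $(\cC_k)_{\red}$, and at a node via the same identity $\tfrac{1}{m_1 m_2}=\tfrac{1}{m_1(m_1+m_2)}+\tfrac{1}{(m_1+m_2)m_2}$). The extra scaffolding you supply --- the well-definedness bookkeeping via filteredness and the appeal to \cite[\S 3.1]{MuNi} for the geometric realization of the combinatorial refinement --- is exactly what the paper handles implicitly through its citation of \cite[3.1.7]{MuNi}.
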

\begin{proof}
The uniqueness of the metric is obvious from Lemma \ref{lemm:union}. Thus it suffices to prove its existence.  Let $\cC$ and $\cC'$ be $nc$-models of $C$ such that $\cC'$ dominates $\cC$.
 Then the skeleton $\Sk(\cC)$ is contained in $\Sk(\cC')$ by \cite[3.1.7]{MuNi}, and it suffices to show that the corresponding embedding
 $\Gamma(\cC_k)\to \Gamma(\cC'_k)$ is an isometry. Since we can decompose the morphism $\cC'\to \cC$ into a finite composition of point blow-ups, we can assume that $\cC'\to \cC$
 is the blow-up of $\cC$ at a closed point $x$ of $\cC_k$. If $x$ is a regular point of $(\cC_k)_{\red}$ then the claim is obvious. If $x$ is a singular point
  then $(G(\cC'_k),w)$ is obtained from $(G(\cC_k),w)$ by adding a vertex on the edge $e$ corresponding to $x$ and giving it weight $w(v_1)+w(v_2)$, where $v_1$ and $v_2$
   are the (not necessarily distinct) endpoints of $e$.
    The lengths of the segment $e$ in the metric graphs $\Gamma(\cC_k)$ and $\Gamma(\cC'_k)$ are the same, because
    $$\frac{1}{w(v_1)\cdot w(v_2)}=\frac{1}{w(v_1)\cdot(w(v_1)+w(v_2))}+\frac{1}{(w(v_1)+w(v_2))\cdot w(v_2)}.$$
\end{proof}

\begin{remark}
 There is another metric on $\HH_0(C)$ that is induced by the piecewise integral affine structure on the skeleta of $snc$-models; we will explain its construction in Section \ref{sec:appendix}. Although this second metric arises more naturally and behaves better under base change, the one we defined in Theorem \ref{thm:metric} seems to be the correct one for the purposes of potential theory. A similar discrepancy appears in the non-archimedean study of germs of algebraic surfaces, which is in many ways analogous to the set-up we consider here; see Section 7.4.10 of \cite{jonsson}.
  \end{remark}

%
%

\section{The weight function and the essential skeleton}\label{sec-weight}
\subsection{The weight function attached to a pluricanonical form}
\sss \label{sss:wtnot} We fix a $K$-curve $C$. Let $m$ be a positive integer and let $\omega$ be a non-zero rational $m$-canonical form on $C$. Thus $\omega$ is a non-zero rational section of the $m$-canonical line bundle $\omega_{C/K}^{\otimes m}$. As such, it defines a Cartier divisor on $C$, which we denote by $\mathrm{div}_C(\omega)$.
 If $\cC$ is any $snc$-model of $C$, we can also view $\omega$ as a rational section of the logarithmic relative $m$-canonical line bundle $$\omega_{\cC/R}(\cC_{k,\red})^{\otimes m}$$ and we denote the corresponding divisor on $\cC$ by $\mathrm{div}_{\cC}(\omega)$. Note that the horizontal part of $\mathrm{div}_{\cC}(\omega)$ is simply the schematic closure of $\mathrm{div}_C(\omega)$ in $\cC$.

\sss In \cite[4.4.4]{MuNi}, Musta\c{t}\u{a} and the second author attached to $\omega$ a so-called {\em weight function} $\wt_{\omega}$. In our setting (the case of curves) we can characterize its restriction to $\mathbb{H}_0(C)$ in the following way. Note that the points of type II on $C^{\an}$ are precisely the divisorial points in the sense of \cite[2.4.10]{MuNi}, and that the points of type II and III are precisely the monomial points.

\begin{prop}\label{prop:defwt}
The weight function
$$\wt_{\omega}:\mathbb{H}_0(C)\to \R$$ is the unique function with the following properties, for every $snc$-model $\cC$ of $C$:
\begin{enumerate}
\item The restriction of $\wt_{\omega}$ to $\Sk(\cC)$ is continuous with respect to the metric topology (which coincides with the Berkovich topology on $\Sk(\cC)$).
\item Let $E$ be an irreducible component of $\cC_k$.  We denote by $N$ and $\nu$ the multiplicities of $E$ in $\cC_k$ and $\mathrm{div}_{\cC}(\omega)$, respectively. If $x$ is the divisorial point of $C^{\an}$ attached to $(\cC,E)$ (equivalently, the vertex of $\Sk(\cC)$ corresponding to $E$), then
    $$\wt_{\omega}(x)=\frac{\nu}{N}.$$
    \end{enumerate}
\end{prop}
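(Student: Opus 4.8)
The plan is to recover the weight function from its values at divisorial points together with a piecewise-linearity (or affineness) property on the metric skeleton, and to verify that the characterization is well-posed as stated. First I would recall the definition of $\wt_{\omega}$ from \cite[4.4.4]{MuNi}: it is defined first on the dense set of monomial (quasi-monomial) points via the log-discrepancy-type formula involving the weights along the components of the special fiber, and then extended by continuity. The key structural input I would lean on is that on each edge of $\Sk(\cC)$ the function $\wt_{\omega}$ is affine with respect to a natural coordinate, so that its restriction to $\Sk(\cC)$ is determined by its values at the vertices. Thus, granting continuity on $\Sk(\cC)$, properties (1) and (2) already pin down $\wt_{\omega}$ on the skeleton of each fixed $snc$-model $\cC$.

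The verification of property (2) is essentially a computation at a divisorial point. If $x$ corresponds to the pair $(\cC,E)$ with $E$ of multiplicity $N$ in $\cC_k$, then $x$ is the monomial point whose defining valuation is $\frac{1}{N}$ times the divisorial valuation $\ord_E$. I would evaluate the Musta\c{t}\u{a}--Nicaise formula at $x$: the weight measures the order of vanishing (or pole) of $\omega$, viewed as a section of the logarithmic relative $m$-canonical bundle $\omega_{\cC/R}(\cC_{k,\red})^{\otimes m}$, normalized by the multiplicity $N$ of $E$. The multiplicity of $E$ in $\mathrm{div}_{\cC}(\omega)$ is exactly $\nu$, and the normalization by $N$ yields $\wt_{\omega}(x)=\nu/N$, matching (2). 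I would emphasize that this is precisely the content of the definition in \cite{MuNi} once one identifies the divisorial point attached to $(\cC,E)$ with the vertex of $\Sk(\cC)$ corresponding to $E$.

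For uniqueness, the argument is that the divisorial points (equivalently, vertices of skeleta of $snc$-models) are dense in $\mathbb{H}_0(C)$: by Lemma \ref{lemm:union}, $\mathbb{H}_0(C)=\bigcup_{\cC}\Sk(\cC)$ over a cofinal family of $nc$-models, and passing to finer and finer $snc$-models, every point of $\mathbb{H}_0(C)$ is a limit of vertices. Any two functions satisfying (1) and (2) agree at all such vertices by (2), hence agree on each $\Sk(\cC)$ by affineness/continuity, and therefore agree on all of $\mathbb{H}_0(C)$. For existence, one simply checks that the function $\wt_{\omega}$ defined in \cite{MuNi} has properties (1) and (2); continuity on $\Sk(\cC)$ is inherited from the definition, and (2) is the computation above.

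The main obstacle I expect is making the continuity claim in (1) and the compatibility between skeleta precise: one must ensure that the value $\nu/N$ computed on the model $\cC$ is \emph{independent of the chosen $snc$-model} refining the given point, so that the prescriptions coming from different models are consistent. This is exactly where the metric of Theorem \ref{thm:metric} enters, since the isometric embeddings $\Gamma(\cC_k)\to \mathbb{H}_0(C)$ guarantee that the affine structure (hence the interpolated values of $\wt_{\omega}$) is compatible under domination $\cC'\geq\cC$. Concretely, I would check that adding a vertex $v$ of weight $w(v_1)+w(v_2)$ on an edge (the elementary blow-up from the proof of Theorem \ref{thm:metric}) produces a divisorial value at $v$ that agrees with the linear interpolation of the values at $v_1$ and $v_2$; this is a direct computation with the multiplicities $N$ and $\nu$ under blow-up, and it is the crux that ties property (2) on the finer model to continuity on the coarser one.
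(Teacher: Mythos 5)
Your proposal follows essentially the same route as the paper: existence is quoted from the construction in \cite{MuNi} (property (2) is part of the definition of $\wt_{\omega}$, and continuity on $\Sk(\cC)$ is \cite[4.3.3]{MuNi}), and uniqueness comes from Lemma \ref{lemm:union} together with the density of divisorial points in each skeleton, which the paper makes precise by noting that divisorial points are exactly the points of $\Gamma(\cC_k)$ with rational barycentric coordinates. One correction, however: your claim that $\wt_{\omega}$ is affine on every edge of $\Sk(\cC)$, ``so that its restriction to $\Sk(\cC)$ is determined by its values at the vertices,'' is false in general. The paper explicitly warns (just before Corollary \ref{cor:bound}) that $\wt_{\omega}$ is only \emph{piecewise} affine on the edges of an arbitrary $snc$-model; affineness on edges holds only when $\cC$ is a model of the pair $(C,\mathrm{div}_C(\omega))$, as in Theorem \ref{thm:laplacian}. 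Moreover, affineness could not legitimately enter the uniqueness argument in any case, since a candidate function is only assumed to satisfy (1) and (2), neither of which posits affineness. Fortunately this does not sink your proof: the density-plus-continuity argument you also give (agreement at all divisorial points, i.e., vertices of arbitrarily fine $snc$-models, then continuity of the restriction to each fixed $\Sk(\cC)$, then Lemma \ref{lemm:union}) is exactly the paper's and suffices on its own --- with the caveat that the limits of divisorial points must be taken \emph{inside a fixed skeleton}, because (1) only gives continuity of the restriction to $\Sk(\cC)$; indeed $\wt_{\omega}$ is not continuous for the Berkovich topology on $\mathbb{H}_0(C)$. Finally, your closing consistency check under elementary blow-ups is not needed for the stated characterization: consistency of the prescriptions in (2) across models is automatic from the existence of $\wt_{\omega}$, which the citation to \cite{MuNi} supplies.
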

\begin{proof}
 It is shown in \cite[4.3.3]{MuNi} that the weight function is continuous (even piecewise affine) on $\Sk(\cC)$, and the description at divisorial points is part of its definition.
 Uniqueness is clear from Lemma \ref{lemm:union} and the fact that the divisorial points are dense in the skeleton of every $snc$-model of $C$ (by the proof of  \cite[2.4.11]{MuNi}, they correspond precisely to the points on $\Gamma(\cC)$ with rational barycentric coordinates in the sense of \cite[3.1.2]{MuNi}).
\end{proof}

\sss Beware that the weight function is not continuous with respect to the Berkovich topology on $\mathbb{H}_0(C)$ (see \cite[4.4.6]{MuNi} for a counterexample).
  The explicit description of the weight function given in Theorem \ref{thm:laplacian} below shows in particular that it is continuous with respect to the metric topology on $\mathbb{H}_0(C)$ (which is strictly finer than the Berkovich topology).

\subsection{The Laplacian of the weight function}
\sss By a {\em pair} over $K$, we mean a couple $(C,\delta)$ consisting of a $K$-curve $C$
 and a divisor $\delta$ on $C$. An $nc$-model of a pair $(C,\delta)$ is an $nc$-model $\cC$ of $C$ such that the sum of $\cC_k$ with the schematic closure of $\delta$ is a normal crossings divisor on $\cC$. An $snc$-model of $(C,\delta)$ is defined analogously.
  Note that for every point $x$ in the support of $\delta$, the specialization of $x$ to $\cC_k$ lies in a unique irreducible component $E$ of $\cC_k$, and the multiplicity of $\cC_k$ along $E$ is equal to the degree of $x$ over $K$, by the normal crossings condition.
   The skeleton of $(\cC,\delta)$ is defined to be the intersection of $\mathbb{H}_0(C)$ with the convex hull in $C^{\an}$ of $\Sk(\cC)$ and the support of $\delta$. We will denote it by $\Sk(\cC,\delta)$. Thus we obtain $\Sk(\cC,\delta)$ from $\Sk(\cC)$ by adding, for each point $x$ in the support of $\delta$, the open branch running from $\Sk(\cC)$ towards $x$.  This construction is similar to the definition of the skeleton of a strictly semi-stable pair in \cite{GubRabWer}, but there it is assumed that $K$ is algebraically closed and that $\cC_k$ is reduced and has strict normal crossings. By restricting the metric on $\mathbb{H}_0(C)$ to the skeleton $\Sk(\cC,\delta)$, we can view the skeleton as a metric graph with some half-open edges of infinite length. Then it makes sense to speak about a $\Z$-affine function $f$ on $\Sk(\cC,\delta)$ (i.e., a continuous real-valued function that is integral affine on every edge) and the Laplacian $\Delta(f)$ of such a function (the divisor on $\Sk(\cC,\delta)$ whose degree at a vertex is the sum of the outgoing slopes of $f$).

\sss Our aim is to give a combinatorial description of the weight function $\wt_{\omega}$ on $\mathbb{H}_0(C)$ attached to a non-zero rational $m$-canonical form $\omega$ on $C$. For this description we need to introduce the $m$-canonical divisor of a labelled graph. Let $G$ be a discrete graph without loops, where we allow some of the edges of $G$ to be half-open (i.e., the edge has only one adjacent vertex and is unbounded at the other side). Assume that each vertex $v$ of $G$ is labelled by a couple of non-negative integers $(N(v),g(v))$. Then the canonical divisor of $G$ is defined by
$$K_G=\sum_{v\in V(G)}N(v)(\mathrm{val}(v)+2g(v)-2)v$$
where $\mathrm{val}(v)$ denotes the valency at $v$, that is, the number of edges (bounded and unbounded) in $G$ adjacent to $v$.
 When $N(v)=1$ and $g(v)=0$ for every vertex $v$, this is just the usual definition of the canonical divisor of a discrete graph.
The $m$-canonical divisor of $G$ is defined as $m$ times the canonical divisor $K_G$.

 \begin{theorem}\label{thm:laplacian}
  We fix a $K$-curve $C$. Let $m$ be a positive integer and let $\omega$ be a non-zero rational $m$-canonical form on $C$.
 Let $\delta$ be any divisor on $C$ whose support contains the support of $\mathrm{div}_C(\omega)$ and let $\cC$ be an $snc$-model for the pair $(C, \delta)$.
 \begin{enumerate}
 \item \label{it:affine} The weight function  $\wt_{\omega}$ is $\Z$-affine on every edge of $\Sk(\cC,\delta)$.
 \item \label{it:slope} For every point $x$ in the support of $\delta$, the weight function $\wt_{\omega}$ has constant slope
 on the path running from $\Sk(\cC)$ to $x$ in $C^{\an}$, and this slope is equal to $$N(m+\mathrm{deg}_x(\mathrm{div}_C(\omega)))$$ where $N$ denotes the multiplicity of the unique component in $\cC_k$ containing the specialization of $x$.
 \item \label{it:lapl} The Laplacian of the restriction of $\wt_{\omega}$ to $\Sk(\cC,\delta)$ is equal to the $m$-canonical divisor of the graph $\Sk(\cC,\delta)$ if we label each vertex $v$ with the couple $(N(v),g(v))$ where $N(v)$ is
      the multiplicity of the corresponding irreducible component in $\cC_k$ and $g(v)$ denotes its genus.
 \end{enumerate}
  \end{theorem}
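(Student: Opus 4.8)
The plan is to reduce everything to a local computation at divisorial points and then use continuity and piecewise-affineness to propagate. First I would exploit the fact that the weight function $\wt_\omega$ is already known to be piecewise $\Z$-affine on $\Sk(\cC)$ (by \cite[4.3.3]{MuNi} together with Proposition \ref{prop:defwt}), so that part (\ref{it:affine}) on the bounded edges follows once I verify that the integral affine structure used by Musta\c{t}\u{a}--Nicaise matches the metric of Theorem \ref{thm:metric}. The substantive content is to compute the outgoing slopes of $\wt_\omega$ at each vertex and on each half-open edge, and then to sum them up to identify the Laplacian.

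For part (\ref{it:slope}), I would proceed by passing to the model $\cC'$ obtained by blowing up $\cC$ so as to separate the branch toward a point $x \in \mathrm{supp}(\delta)$; equivalently I would compute the weight at the divisorial points along that branch directly from Proposition \ref{prop:defwt}(2). The key input is the adjunction/residue behaviour of the logarithmic pluricanonical sheaf $\omega_{\cC/R}(\cC_{k,\red})^{\otimes m}$ under a single point blow-up. Concretely, if $E'$ is an exceptional component over a point on the horizontal part $\overline{\{x\}}$, its multiplicity $N'$ in $\cC'_k$ and the multiplicity $\nu'$ of $E'$ in $\divi_{\cC'}(\omega)$ can be read off from the standard transformation rules: blowing up adds $1$ to the log-canonical discrepancy along the strict transform of $\{x\}$, while the horizontal contribution contributes $\deg_x(\divi_C(\omega))$. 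Taking the ratio $\nu'/N'$ and comparing consecutive divisorial points along the branch should yield the slope $N(m + \deg_x(\divi_C(\omega)))$ after clearing denominators; the factor $N$ appears because the metric \eqref{def:metric} weights edge lengths by the reciprocal product of vertex multiplicities. I expect the bookkeeping of how $m$ (from the tensor power) and $\deg_x$ (from the horizontal divisor) combine to be the delicate point here.

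For part (\ref{it:lapl}), I would compute, for each vertex $v$ corresponding to a component $E$ with multiplicity $N(v)$ and genus $g(v)$, the sum of the outgoing slopes of $\wt_\omega$ along all edges incident to $v$. Along a bounded edge corresponding to an intersection point of $E$ with a neighbouring component $E''$, I would again use Proposition \ref{prop:defwt}(2) at the two vertices together with the edge length $1/(N(v)N(v''))$ to extract the slope, and along each half-open edge I would insert the value from part (\ref{it:slope}). Summing and invoking adjunction on the arithmetic surface $\cC$ — the fact that $\divi_{\cC}(\omega)$ restricted to $E$ computes $m$ times the log-canonical degree, which by the adjunction formula on $E$ equals $m \cdot N(v)\bigl(\mathrm{val}(v) + 2g(v) - 2\bigr)$ — should produce exactly $m K_G$ at $v$. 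The main obstacle is precisely this adjunction step: I would need the intersection-theoretic identity $\bigl(\omega_{\cC/R}(\cC_{k,\red}) \cdot E\bigr) + (\text{horizontal contributions}) = N(v)(2g(v)-2) + \sum_{E'' \neq E}(E \cdot E'')$, valued correctly against the metric normalization, so that the reciprocal-product edge lengths conspire to cancel the extra factors and leave the clean combinatorial expression $m\,K_G$. Verifying that every multiplicity and every self-intersection term lands in the right place, uniformly across loops, multiple edges, and half-open branches, is where the real care is required.
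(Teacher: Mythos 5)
Your proposal follows essentially the same route as the paper's proof: part \eqref{it:affine} via \cite[4.3.3]{MuNi} together with the check that the lcm-based affine structure of \cite{MuNi} only rescales edge lengths by integer factors relative to the metric of Theorem \ref{thm:metric}; part \eqref{it:slope} via the blow-up transformation rule $\omega^{\otimes m}_{\cC'/R}(\cC'_{k,\red})=(h^*\omega^{\otimes m}_{\cC/R}(\cC_{k,\red}))\otimes\mathcal{O}_{\cC'}(-mE')$ evaluated at consecutive divisorial points along the branch; and part \eqref{it:lapl} by summing outgoing slopes at each vertex and using adjunction to identify $\omega_{\cC/R}(\cC_{k,\red})^{\otimes m}|_{E_0}$ with $\omega_{E_0/k}(y_1+\cdots+y_b)^{\otimes m}$. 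The one bookkeeping point you should fix (and which you already flag) is that adjunction yields the degree $m(2g(v)+\mathrm{val}(v)-2)$ \emph{without} the factor $N(v)$; that factor enters only through the edge-length normalization $1/(N_0N_j)$, which converts differences of the values $\nu/N$ into the slopes $\nu_jN_0-\nu_0N_j$, exactly as in the paper.
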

\begin{proof}
\eqref{it:affine} It follows from the proof of \cite[4.3.3]{MuNi} that $\wt_{\omega}$ is $\Z$-affine on $\Sk(\cC)$, because no point in the support of $\mathrm{div}_C(\omega)$ specializes to a singular point of $(\cC_k)_{\red}$. Here some care is needed, since the $\Z$-affine structure in \cite{MuNi} is not the same as the one induced by our metric; it corresponds to the metric one obtains by replacing the definition in \eqref{def:metric} by $$\ell(e)=\frac{1}{\lcm\{w(v_1),w(v_2)\}}.$$ Since this multiplies every edge length by an integer factor, every $\Z$-affine function in the sense of \cite{MuNi} is also $\Z$-affine with respect to the metric we use; we will come back to this point in Section \ref{sec:appendix}. The fact that $\wt_{\omega}$ is also $\Z$-affine on the unbounded edges of $\Sk(\cC,\delta)$ is a consequence of \eqref{it:slope}.

\eqref{it:slope} Let $x$ be a closed point of $C$.
 We can compute the slope of $\wt_{\omega}$ on the path running from $\Sk(\cC)$ to $x$ as follows. Denote by $E$ the unique irreducible component of $\cC_k$ containing the specialization $x_k$ of $x$.
  Denote by $N$ the multiplicity of $E$ in $\cC_k$ and by $\nu$ the multiplicity of $E$ in $\mathrm{div}_{\cC}(\omega)$. Let $h:\cC'\to \cC$ be the blow-up at $x_k$. Then $\cC'$ is again an $snc$-model of $(C,\delta)$ and its skeleton $\Sk(\cC')$ is obtained from $\Sk(\cC)$ by adding a closed interval $I$ in the direction of $x$. The length of this interval is $1/N^2$, since the exceptional component $E'$ of the blow-up has multiplicity $N$ in $\cC'_k$. Moreover, the multiplicity of $E'$ in $\mathrm{div}_{\cC'}(\omega)$ is equal to
    $$\nu+m+\mathrm{deg}_x(\mathrm{div}_C(\omega)))$$
    because
    $$\omega^{\otimes m}_{\cC'/R}(\cC'_{k,\red})=(h^*\omega^{\otimes m}_{\cC/R}(\cC_{k,\red}))\otimes \mathcal{O}_{\cC'}(-mE')$$ as submodules of the pushforward of $\omega^{\otimes m}_{C/K}$ to $\cC'$.
 Thus if we denote by $v$ and $v'$ the vertices of $\Sk(\cC')$ corresponding to $E$ and $E'$, respectively, then $\wt_{\omega}(v)=\nu/N$ and  $$\wt_{\omega}(v')=(m+\nu+\mathrm{deg}_x(\mathrm{div}_C(\omega)))/N.$$ Since $v$ and $v'$ are precisely the endpoints of $I$, we see that $\wt_{\omega}$ has slope
  $$N(m+\mathrm{deg}_x(\mathrm{div}_C(\omega)))$$ on $I$ if we orient $I$ from $v$ to $v'$. Replacing $\cC$ by $\cC'$ and repeating the argument, we conclude that $\wt_{\omega}$ has constant slope
   $$N(m+\mathrm{deg}_x(\mathrm{div}_C(\omega)))$$ along the whole path from $v$ to $x$.

 \eqref{it:lapl}  It remains to compute the Laplacian $\Delta(\wt_{\omega})$ of $\wt_{\omega}$ on $\Sk(\cC,\delta)$. Let $v_0$
    be a vertex of $\Sk(\cC)$ corresponding to an irreducible component $E_0$ of $\cC_k$. Denote by $x_1,\ldots,x_a$ the
     points in the support of $\delta$ that specialize to a point in $E_0$, and by $y_1,\ldots,y_b$ the intersection points of $E_0$ with the other irreducible components of $\cC_k$. For each $i\in \{1,\ldots,b\}$ we denote by $E_i$ the unique irreducible component of $\cC_k$ intersecting $E_0$ at $y_i$. For each $i\in \{0,\ldots,b\}$ we write $\nu_i$ and $N_i$ for the multiplicities of $E_i$ in $\mathrm{div}_{\cC}(\omega)$ and $\cC_k$, respectively. Then the edges of $\Sk(\cC)$ adjacent to $v_0$ correspond precisely to the points $y_1,\ldots,y_b$, and the unbounded edges of $\Sk(\cC,\delta)$ adjacent to $v_0$ are precisely the paths from $v_0$ to the points $x_1,\ldots,x_a$. We have already computed the slopes of $\wt_{\omega}$ along these unbounded edges, and taking into account the edge lengths of $\Sk(\cC)$ we find that the degree of $\Delta(\wt_\omega)$ at $v_0$ is equal to
     $$\sum_{i=1}^a (N_0m+\mathrm{deg}_{x_i}(\mathrm{div}_C(\omega))) +\sum_{j=1}^b(\nu_jN_0-\nu_0N_j).$$
This is nothing but
$$mN_0a+N_0(E_0\cdot (\mathrm{div}_{\cC}(\omega)-\frac{\nu_0}{N_0}\cC_k))=mN_0a+N_0(E_0\cdot \mathrm{div}_{\cC}(\omega)). $$
By adjunction, the restriction of the line bundle $\omega_{\cC/R}(\cC_{k,\red})^{\otimes m}$ to $E_0$ is precisely
$$\omega_{E_0/k}(y_1+\ldots+y_b)^{\otimes m}.$$
By computing the degree of this line bundle we find that the degree of $\Delta(\wt_\omega)$ at $v_0$ is equal to
$$mN_0(a+b+2g(E_0)-2)$$
where $g(E_0)$ denotes the genus of $E_0$. By definition, this is exactly the degree of the  $m$-canonical divisor of $\Sk(\cC,\delta)$ at $v_0$.
\end{proof}

\sss  We can use Theorem \ref{thm:laplacian} to describe the Laplacian of the restriction of the weight function to the skeleton of {\em any} $snc$-model $\cC$ of $C$.
   Beware that the weight function is not necessarily affine on the edges of $\Sk(\cC)$, only piecewise affine. The Laplacian of such a function is still defined, but it is no longer supported on the vertices of $\Sk(\cC)$, in general.
 We denote by $(\rho_{\cC})_*$ the map on divisors induced by linearity from the retraction map $\rho_{\cC}: C^\an \to \Sk(\cC)$.
  We have $$(\rho_{\cC})_*(x)=\mathrm{deg}(x)\cdot \rho_{\cC}(x)$$ for every type I point $x$ of $C^{\an}$.

\begin{corollary}\label{cor:bound}
Let $\cC$ be any $snc$-model of $C$. We denote by $f$
 the restriction of $\wt_{\omega}$ to $\Sk(\cC)$ and by $mK_{\Sk(\cC)}$ the $m$-canonical divisor of $\Sk(\cC)$, where we label each vertex of $\Sk(\cC)$ by its multiplicity and genus as before. Then $$\Delta(f) = mK_{\Sk(\cC)} - (\rho_{\cC})_* \left( {\rm div}_C(\omega) \right).$$ In particular, if $\omega$ is regular, then $\Delta(f) \leq mK_{\Sk(\cC)}$.
\end{corollary}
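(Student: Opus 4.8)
The plan is to reduce to Theorem~\ref{thm:laplacian} on a model adapted to $\omega$ and then to transport the resulting identity down to the given model $\cC$ along the retraction $\rho_{\cC}$. Set $\delta=\mathrm{div}_C(\omega)$ and choose an $snc$-model $\cC'$ of the pair $(C,\delta)$ dominating $\cC$; this is possible since any $nc$-model can be dominated by an $snc$-model of a prescribed pair after finitely many point blow-ups. Then $\Sk(\cC)\subseteq\Sk(\cC')\subseteq\Sk(\cC',\delta)$, and Theorem~\ref{thm:laplacian}\eqref{it:lapl} gives $\Delta(g)=mK_{\Sk(\cC',\delta)}$ for $g:=\wt_{\omega}|_{\Sk(\cC',\delta)}$. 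The transport is carried out in two stages, first deleting the unbounded edges and then pushing forward a finite graph.

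First I pass from $\Sk(\cC',\delta)$ to $\Sk(\cC')$ by restricting $g$ to $h:=\wt_{\omega}|_{\Sk(\cC')}$. At a vertex $v$ the Laplacian of $h$ differs from that of $g$ by the outgoing slopes of $\wt_{\omega}$ along the half-open edges leaving $v$ toward the points of $\delta$, and these slopes are computed by Theorem~\ref{thm:laplacian}\eqref{it:slope}. Each such edge is attached at the vertex $v_x$ of the component of $\cC'_k$ carrying the specialization of $x$, and adjoining it raises the combinatorial valency, hence the coefficient of $K$ at $v_x$, by the multiplicity $N(v_x)$. Combining the slope with this valency jump, and using the normal-crossings relation $N(v_x)=\deg(x)$, the two effects collapse into the local degree of $\mathrm{div}_C(\omega)$ at $x$, yielding $\Delta(h)=mK_{\Sk(\cC')}-\sum_x d_x\,[v_x]$, where $d_x$ denotes that local degree.

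Second I push forward along $\rho_{\cC}\colon\Sk(\cC')\to\Sk(\cC)$. Now $\Sk(\cC')$ is obtained from $\Sk(\cC)$ only by subdividing edges and grafting on finite trees of rational tails, with no unbounded edges present, so the Laplacian of a restriction is functorial: $\Delta(f)=(\rho_{\cC})_*\Delta(h)$ for $f=h|_{\Sk(\cC)}$. This is the elementary fact that the flux a piecewise-affine function emits into a finite tail is recaptured at the leaf and carried back to the attachment point by $(\rho_{\cC})_*$. The divisor part then becomes $\sum_x d_x\,[\rho_{\cC}(x)]=(\rho_{\cC})_*(\mathrm{div}_C(\omega))$, and it remains to see that $(\rho_{\cC})_*(mK_{\Sk(\cC')})=mK_{\Sk(\cC)}$. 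I would establish $(\rho_{\cC})_*K_{\Sk(\cC')}=K_{\Sk(\cC)}$ by induction on the number of blow-ups: blowing up a smooth point of $(\cC_k)_{\mathrm{red}}$ grafts a genus-$0$, valency-$1$ tail whose vertex contributes $-N$ while raising the neighbouring coefficient by $+N$, and blowing up a node inserts a genus-$0$, valency-$2$ vertex contributing $0$; in both cases the pushforward is unchanged.

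Assembling the two stages gives $\Delta(f)=mK_{\Sk(\cC)}-(\rho_{\cC})_*(\mathrm{div}_C(\omega))$, and the final assertion follows at once: if $\omega$ is regular then $\mathrm{div}_C(\omega)\geq 0$, its pushforward is effective, and hence $\Delta(f)\leq mK_{\Sk(\cC)}$. I expect the crux to lie in the first stage. The naive expectation that $\Delta$ commutes with $(\rho_{\cC})_*$ fails precisely at the unbounded edges, since there is no leaf ``at infinity'' to recapture the outgoing flux; it is this failure, quantified by the slope formula of Theorem~\ref{thm:laplacian}\eqref{it:slope}, that generates the correction $-(\rho_{\cC})_*(\mathrm{div}_C(\omega))$. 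The one delicate point is matching constants — reconciling the slope $N(m+\deg_x(\mathrm{div}_C(\omega)))$ with the local degree $\deg(x)\cdot\mathrm{ord}_x(\omega)$ through the identity $N=\deg(x)$ — which must be done with care.
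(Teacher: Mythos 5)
Your proof is correct and follows essentially the same route as the paper's: dominate $\cC$ by an $snc$-model $\cC'$ of the pair $(C,\mathrm{div}_C(\omega))$, apply Theorem \ref{thm:laplacian} there, push forward along $\rho_{\cC}$ using that the fibres are metric trees, and verify $\rho_*K_{\Sk(\cC')}=K_{\Sk(\cC)}$ by factoring $\cC'\to\cC$ into point blow-ups. The only difference is that you make explicit two steps the paper leaves as ``easy'' --- the passage from $\Sk(\cC',\delta)$ to $\Sk(\cC')$, where the unbounded-edge slopes $N(m+\deg_x(\mathrm{div}_C(\omega)))$ and the valency terms of $mK$ combine (via $N=\deg(x)$) into $(\rho_{\cC'})_*(\mathrm{div}_C(\omega))$, and the two local blow-up cases for the invariance of the pushed-forward canonical divisor --- and your bookkeeping in both is accurate.
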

\begin{proof}
 We can always dominate $\cC$ by an $snc$-model $\cC'$ of the pair $(C,\mathrm{div}_C(\omega))$. If we denote by $f'$ the restriction of $\wt_{\omega}$ to $\Sk(\cC')$, then it follows easily from Theorem \ref{thm:laplacian} that
 $$\Delta(f') = mK_{\Sk(\cC')} - (\rho_{\cC'})_* \left( {\rm div}_C(\omega) \right).$$ Denote by $\rho:\Sk(\cC')\to \Sk(\cC)$ the map of metric graphs obtained by restricting $\rho_{\cC}$ to $\Sk(\cC')$. Since the fibers of $\rho$ are metric trees, it is straightforward to check that $\Delta(f) = \rho_* (\Delta(f'))$. On the other hand, we also have that $$\rho_*(\rho_{\cC'})_* \left( {\rm div}_C(\omega) \right)=(\rho_{\cC})_* \left( {\rm div}_C(\omega) \right),$$ and by factoring $\cC'\to \cC$ into point blow-ups, one sees that $\rho_*(K_{\Sk(\cC')})=K_{\Sk(\cC)}$. Thus the formula is valid for $\cC$, as well.
\end{proof}

\begin{example} \label{sss:exam} Let $C$ be an elliptic curve over $K$ of Kodaira-N\'eron reduction type II and let $\omega$ be a generator for the relative canonical line bundle of the minimal regular model of $C$. Let $\cC$ be the minimal $snc$-model of $C$. Then the special fiber of $\cC$  is of the form $$\cC_k=E_1+2E_2+3E_3+6E_4$$ where each component $E_i$ is a rational curve, $E_4$ intersects each other component in precisely one point and there are no other intersection points. The skeleton $\Sk(\cC)$ consists of four vertices $v_1,\ldots,v_4$ corresponding to the components $E_1,\ldots,E_4$. These are joined by three edges of respective lengths $\ell(v_1v_4)=1/6$, $\ell(v_2v_4)=1/12$ and $\ell(v_3v_4)=1/18$. Moreover,
$$\mathrm{div}_{\cC}(\omega)=E_1+2E_2+3E_3+5E_4$$ and the weight function $\wt_{\omega}$ is affine on $\Sk(\cC)$ with values $1,1,1,5/6$ at the vertices $v_1,v_2,v_3,v_4$, respectively. Direct computation shows that $$\Delta\wt_{\omega}=6v_4-v_1-2v_2-3v_3$$ which is also the canonical divisor of $\Sk(\cC)$ (labelled with multiplicities and genera).
\end{example}

\begin{remark} \label{rmk:nondiscretecase}
It is worth noting that Corollary~\ref{cor:bound} uniquely determines $\wt_\omega$ up to an additive constant as a function on $\mathbb{H}_0(C)$, and that this description of $\wt_\omega$ does not require $K$ to be discretely valued (if we replace $snc$-models by semi-stable models).  This gives us a way to define $\wt_\omega$ for any curve $C$ over any non-trivially valued
non-Archimedean field $K$ and any non-zero rational $m$-canonical form $\omega$ on $C$.  Michael Temkin \cite{Temkin} has recently discovered a different way to extend the definition of $\wt_\omega$ to the non-discretely valued setting, and his method works in any dimension.
\end{remark}

\subsection{The essential skeleton}
\sss \label{sss:KSsk0} Let $C$ be a $K$-curve of genus $g(C)\geq 1$ and let $\omega$ be a non-zero regular $m$-canonical form on $C$, for some positive integer $m$. Then it is easy to deduce
 from the properties of the weight function $\wt_{\omega}$ in Theorem \ref{thm:laplacian} that this function is bounded below, and that its locus
 of minimal values is a union of closed faces of $\Sk(\cC)$ for any $snc$-model $\cC$ of $(C,\mathrm{div}_C(\omega))$. Corollary \ref{cor:bound} shows that this remains true for any $snc$-model $\cC$ of $C$ (one needs to observe that the weight function is {\em concave} on every edge of $\Sk(\cC)$ because its Laplacian is non-positive at each point in the interior of an edge);
  see \cite[4.5.5]{MuNi} for a more general statement.
 The locus of minimal values of $\wt_\omega$ was called the Kontsevich-Soibelman skeleton of the pair $(C,\omega)$ in \cite[4.5.1]{MuNi} and denoted by $\Sk(C,\omega)$. The essential skeleton $\Sk(C)$ is the union of the Kontsevich-Soibelman skeleta $\Sk(C,\omega)$ over all the non-zero regular pluricanonical forms $\omega$ on $C$ (see \cite[4.6.2]{MuNi}). The aim of this section is to compare
 the essential skeleton $\Sk(C)$ to the skeleton $\Sk(\cC)$ of the minimal $snc$-model $\cC$ of $C$.

\sss \label{sss:KSsk} We first recall the description of the Kontsevich-Soibelman skeleton $\Sk(C,\omega)$ in terms of an $snc$-model $\cC$ of $C$ (see \cite[4.5.5]{MuNi} for a more general result; in our setting, it can also be easily deduced from Proposition \ref{prop:defwt} and Theorem \ref{thm:laplacian}). We write $$\cC_{k}=\sum_{i\in I}N_i E_i$$ and we denote by $\nu_i$ the multiplicity of $E_i$ in $\mathrm{div}_{\cC}(\omega)$, for every $i\in I$. We say that the vertex of $\Sk(\cC)$ corresponding to a component $E_j$, $ j\in I$, is {\em $\omega$-essential} if
$$\frac{\nu_j}{N_j}=\min\{\frac{\nu_i}{N_i}\,|\,i\in I\}.$$ We say that an edge in $\Sk(\cC)$ is $\omega$-essential if its adjacent vertices are $\omega$-essential and the point of $\cC_k$ corresponding to the edge is not contained in the closure of $\mathrm{div}_C(\omega)$ (i.e., the horizontal part of $\mathrm{div}_{\cC}(\omega)$). Then $\Sk(C,\omega)$ is the union of the $\omega$-essential faces of $\Sk(\cC)$. Note however that, by its very definition, $\Sk(C,\omega)$ does not depend on the choice of a particular model $\cC$.

\sss \label{sss:tails} In order to determine the essential skeleton $\Sk(C)$, we will need a description of the base locus of the logarithmic pluricanonical bundle on the minimal $snc$-model of $C$. Let $\cC$ be any $snc$-model of $C$. We  label the vertices of the skeleton $\Sk(\cC)$ by the multiplicities and genera of the corresponding irreducible components of $\cC_k$.
 We define a {\em tail} in $\Sk(\cC)$ as a connected subchain with successive vertices $v_0,\ldots,v_n$ where $v_n$ has valency one in $\Sk(\cC)$, $v_i$ has valency $2$ in $\Sk(\cC)$ for $1\leq i<n$, and $v_i$ has genus zero for $1\leq i\leq n$. We say that the tail is {\em maximal} if $v_0$ has valency at least $3$ in $\Sk(\cC)$ or $v_0$ has positive genus. The vertex $v_0$ is called the starting point of the maximal tail and $v_n$ is called its end point. We call the components of $\cC_k$ corresponding to the vertices $v_1,\ldots,v_n$ {\em inessential} components of $\cC_k$.
 The {\em combinatorial skeleton} of $\Sk(\cC)$ is the subspace that we obtain by replacing every maximal tail by its starting point.
 Thus in Example \ref{sss:exam}, the combinatorial skeleton of $\Sk(\cC)$ consists only of the vertex $v_4$. Note that contracting maximal tails may create new ones, but we do {\em not} repeat the operation to contract those. For instance, if $C$ is an elliptic $K$-curve of reduction type $I_n^*$ and $\cC$ is its minimal $snc$-model, then the combinatorial skeleton of $\Sk(\cC)$ is the subchain formed by the $n+1$ vertices of multiplicity two. Note that $\cC_k$ can never consist entirely of inessential components, by our assumption that $g(C)\geq 1$ (this follows from basic intersection theory and adjunction; see for instance \cite[3.1.2]{Ni-tameram}). We also observe that, if $C$ has semi-stable reduction and $\cC$ is its minimal $snc$-model, there are no inessential components in $\cC_k$ because the end point of a tail would correspond to a rational $(-1)$-curve, which contradicts the minimality of $\cC$.

\sss We will need a technical lemma on two-dimensional log regular schemes. We refer to \cite{kato-log} for the basic theory of log schemes, and to \cite{kato-toric}
for the theory of log regular schemes.

\begin{lemma}\label{lemm:factorial}
Let $A$ be a normal Noetherian local ring of dimension $2$ and let $D=D_0+D_1$ be a reduced Weil divisor on $X=\Spec A$ with prime components $D_0,D_1$. We define a log scheme $X^+$ by endowing $X$ with the divisorial log structure induced by $D$. Assume that $X^+$ is log regular. Then $D_0$ and $D_1$ are $\Q$-Cartier, and $D_0\cdot D_1\leq 1$ with equality if and only if $A$ is regular.
\end{lemma}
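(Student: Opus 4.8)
Both prime divisors $D_0$ and $D_1$ contain the closed point $x$ of $X$ (a nonempty closed subset of the spectrum of a local ring contains the closed point), and being distinct of codimension one in the two-dimensional local scheme $X$ they meet only in $x$ as sets. Consequently, once $D_0$ and $D_1$ are known to be $\Q$-Cartier, the number $D_0\cdot D_1$ is a well-defined local intersection multiplicity at $x$. The plan is to compute everything after completion: the property of being $\Q$-Cartier (invertibility of a reflexive rank-one module) and the length of an $\mathfrak m$-primary quotient are both preserved under the faithfully flat map $A\to\widehat A$, and $A$ is regular if and only if $\widehat A$ is. We may thus replace $A$ by $\widehat A$.

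Next I would invoke Kato's structure theorem for log regular schemes (\cite{kato-toric}). As $X^+$ is log regular of dimension two with divisorial log structure supported on the two branches $D_0,D_1$ through $x$, the characteristic monoid $P=\overline M_x=M_x/\mathcal O_x^{\times}$ is sharp, saturated and of rank two, and the ideal generated by its non-units cuts out the closed point. Hence $P=\sigma^{\vee}\cap L$ for a lattice $L\cong\Z^2$ and a two-dimensional strongly convex rational polyhedral cone $\sigma$ in the dual space $N_\R$, where $N=\Hom(L,\Z)$; moreover $\widehat A$ is identified with the completion at the torus-fixed point of the toric surface attached to $\sigma$, in such a way that $D_0$ and $D_1$ correspond to the two torus-invariant boundary divisors, i.e. to the extremal rays $\rho_0,\rho_1$ of $\sigma$ with primitive generators $v_0,v_1\in N$. (In the mixed-characteristic case one uses the analogous log-regular complete local ring governed by $\sigma$; the combinatorics below is identical.)

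A two-dimensional cone is simplicial, so the toric surface is $\Q$-factorial. Explicitly, let $u_i\in L_\Q$ be dual to the $v_i$, so that $\langle u_i,v_j\rangle=\delta_{ij}$, and let $m_i>0$ be minimal with $m_iu_i\in L$; then $m_iu_i$ lies in $\sigma^{\vee}$ and $\divi(\chi^{m_iu_i})=m_iD_i$ locally at $x$, which shows that $D_0$ and $D_1$ are $\Q$-Cartier. Since $\chi^{m_0u_0}$ and $\chi^{m_1u_1}$ cut out $D_0$ and $D_1$ (with multiplicities) and vanish simultaneously only at $x$, they form a regular sequence and
$$m_0m_1\,(D_0\cdot D_1)=\length_{\widehat A}\big(\widehat A/(\chi^{m_0u_0},\chi^{m_1u_1})\big).$$
A direct count of the monomials of $\sigma^{\vee}\cap L$ lying outside $(m_0u_0+\sigma^{\vee})\cup(m_1u_1+\sigma^{\vee})$ --- or, equivalently, pullback to the minimal toric resolution followed by the projection formula --- evaluates the right-hand side to $m_0m_1/\mult(\sigma)$, whence
$$D_0\cdot D_1=\frac{1}{\mult(\sigma)},\qquad \mult(\sigma)=[\,N:\Z v_0+\Z v_1\,]=|\det(v_0\,|\,v_1)|.$$

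Finally, $\mult(\sigma)$ is a positive integer, so $D_0\cdot D_1=1/\mult(\sigma)\le 1$, with equality exactly when $\mult(\sigma)=1$, i.e. when $\{v_0,v_1\}$ is a basis of $N$ and $\sigma$ is a smooth cone. This happens precisely when $P\cong\N^2$ is free, which is equivalent to $\widehat A$ --- and hence $A$ --- being regular. I expect the main obstacle to lie in the application of Kato's structure theorem: one must match the two given prime components $D_0,D_1$ with the two rays of $\sigma$ compatibly with the log structure, and (in mixed characteristic) verify that the arithmetic direction is subsumed by the cone rather than contributing a separate regular factor. Once this toric dictionary is in place, the $\Q$-Cartier claim and the intersection computation are routine.
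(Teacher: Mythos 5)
Your route is genuinely different from the paper's. The paper never completes: it works directly with the characteristic monoid $\overline{\mathcal{M}}=\mathcal{M}/A^{\times}$ and with Kato's fan of the log scheme (\cite[10.1]{kato-toric}, identifying the fan of $\Spec A$ with $\Spec \overline{\mathcal{M}}$), takes the primitive generators $e_0,e_1$ of the two one-dimensional faces of $\overline{\mathcal{M}}$, and computes the divisor of a lift $\widetilde{e}_i\in\mathcal{M}$ from the characteristic monoids $\overline{\mathcal{M}}/\N e_{1-i}$ at the generic points of the $D_i$: this gives $mD_i=\mathrm{div}(\widetilde{e}_{1-i})$ with $m$ the absolute determinant of $(e_0,e_1)$, whence $\Q$-Cartierness at once, and $mD_0\cdot D_1=1$ because $\widetilde{e}_{1-i}$ restricts to a regular parameter on the regular curve $D_{1-i}$. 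Your $\mult(\sigma)$ agrees with this $m$ (for two-dimensional cones the index of the sublattice spanned by the primitive ray generators is the same for $\sigma$ and $\sigma^{\vee}$), and your symmetric formula $m_0m_1(D_0\cdot D_1)=\length\bigl(\widehat{A}/(\chi^{m_0u_0},\chi^{m_1u_1})\bigr)$ together with the lattice-point count in the box $0\le\langle u,v_i\rangle<m_i$ is correct: the count is exact, not just asymptotic, because the image of $L$ under $u\mapsto(\langle u,v_0\rangle,\langle u,v_1\rangle)$ contains $(m_0,0)$ and $(0,m_1)$. But it does strictly more work than the paper's one-line valuation computation, which is uniform in all characteristics. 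Note also, for your reduction step, that what justifies it is that log regularity itself passes to the completion; this is what guarantees that $\widehat{A}$ is normal (a bare normal Noetherian local ring can have non-normal completion) and that the structure theorem applies, while the $D_i$ stay prime because log regularity makes them regular.

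The one genuine gap is the mixed-characteristic case, which you flag but leave open --- and it cannot be waved away here, since the lemma is applied in the proof of Theorem \ref{thm:semiample} to models over $R$, which may well have mixed characteristic. It can be closed as follows. Since $\mathrm{rank}\,P^{\mathrm{gp}}=2=\dim A$, Kato's structure theorem gives $\widehat{A}\cong C(k)[[P]]/(\theta)$ with $C(k)$ a Cohen ring and $\theta=p+\eta$, where $\eta$ lies in the closure of the ideal generated by the nonunit monomials. Then $B=C(k)[[P]]/(\chi^{m_0u_0},\chi^{m_1u_1})$ is free over $C(k)$ of rank $\ell=m_0m_1/\mult(\sigma)$ on the monomials in your box; multiplication by $\eta$ on $B$ is nilpotent, because a long enough product of nonunit monomials leaves the box (the pairing with $v_0+v_1$, which is positive on $\sigma^{\vee}\setminus\{0\}$, grows linearly), so $\det(p\cdot\mathrm{id}+\eta)=p^{\ell}$ and Smith normal form gives $\length(B/\theta B)=v_p(p^{\ell})=\ell$, the same count as in equal characteristic. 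So your formula survives, but this verification is a necessary part of the argument rather than a routine afterthought; the paper's intrinsic computation at the generic points of the $D_i$ is precisely what lets it bypass completion, the structure theorem, and the arithmetic direction altogether.
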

\begin{proof}
We denote by $\mathcal{M}$ the multiplicative monoid consisting of the elements of $A$ that are invertible on $X\setminus D$, and we consider the characteristic monoid
$$\overline{\mathcal{M}}=\mathcal{M}/A^{\times}.$$ By the log regularity assumption, $D_0$ and $D_1$ are regular and $\overline{\mathcal{M}}$ is a toric monoid of dimension $2$.
  In particular, its groupification $\overline{\mathcal{M}}^{\mathrm{gp}}$ is a rank two lattice. The ring $A$ is regular if and only if the monoid $\overline{\mathcal{M}}$ is generated by two elements, that is, $\overline{\mathcal{M}}\cong \N^2$.

 Let $e_0$ and $e_1$ be the primitive generators of the one-dimensional faces of $\overline{\mathcal{M}}$. Then $e_0\wedge e_1$ generates $$m\cdot \bigwedge^2(\overline{\mathcal{M}}^{\mathrm{gp}})$$ for a unique positive integer $m$
 (in other words, $m$ is the absolute value of the determinant of $(e_0,e_1)$) and $m=1$ if and only if $A$ is regular.
 Since the fan of the log scheme $\Spec A$ is canonically isomorphic with $\Spec \overline{\mathcal{M}}$ by \cite[10.1]{kato-toric}, we know that (up to renumbering), $D_i$ is the zero locus in $\Spec A$ of
 the prime ideal $\overline{\mathcal{M}}\setminus \N e_{i}$ of $\overline{\mathcal{M}}$, for $i=0,1$ (by which we mean the zero locus of its inverse image in $\mathcal{M}$). Moreover, any representative $\widetilde{e}_i$ of $e_i$ in $\mathcal{M}$ is a regular local parameter on $D_{i}$, and the characteristic  monoid at the generic point of $D_i$ is $\overline{\mathcal{M}}/\N e_{1-i}$ (see the proof of \cite[4.3.2(1)]{ErHaNi} for a similar computation). It follows that $mD_i=\mathrm{div}(\widetilde{e}_{1-i})$, so that $D_1$ and $D_2$ are $\Q$-Cartier, and $mD_1\cdot D_2=1$. Thus $D_1\cdot D_2=1/m\leq 1$, with equality if and only if $A$ is regular.
\end{proof}

\begin{theorem}\label{thm:semiample}
Let $C$ be a $K$-curve of genus $g(C)\geq 1$ and let $\cC$ be its minimal $snc$-model.
 If $m$ is a sufficiently divisible positive integer, then the base locus of the line bundle $\omega_{\cC/R}(\cC_{k,\red})^{\otimes m}$ on $\cC$ is the union of the inessential components of $\cC_{k}$.
\end{theorem}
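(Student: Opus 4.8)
The plan is to analyze the logarithmic relative pluricanonical bundle $\cL_m := \omega_{\cC/R}(\cC_{k,\red})^{\otimes m}$ component by component on the special fiber $\cC_k$, using the adjunction computation from the proof of Theorem \ref{thm:laplacian} and the intersection-theoretic lemma just proved. Let me write $\cC_k = \sum_{i \in I} N_i E_i$. The key restriction formula is that, by adjunction, the restriction of $\omega_{\cC/R}(\cC_{k,\red})^{\otimes m}$ to a component $E_i$ is $\omega_{E_i/k}(\sum_j y_{ij})^{\otimes m}$, where the $y_{ij}$ are the intersection points of $E_i$ with the other components. So the degree of this restriction on $E_i$ is $m(\operatorname{val}(v_i) + 2g(E_i) - 2)$, where $v_i$ is the vertex of $\Sk(\cC)$ corresponding to $E_i$ and $\operatorname{val}$ is its valency. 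I will split the components into two classes: those where this degree is positive or zero and the line bundle is (eventually) base-point free on $E_i$, and the inessential ones where it is forced to have base points.

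**The main dichotomy.**

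First I would show that an inessential component lies in the base locus. An inessential component $E_i$ (a vertex $v_i$ of valency $\le 2$ and genus $0$ lying on a maximal tail, other than the starting point) has restricted degree $m(\operatorname{val}(v_i) - 2) \le 0$. For the end point of the tail ($\operatorname{val} = 1$) this degree is $-m < 0$, so the restriction $\cL_m|_{E_i}$ has negative degree on a rational curve and hence no nonzero global sections; thus every global section of $\cL_m$ vanishes identically on $E_i$, placing $E_i$ in the base locus. For the interior tail vertices ($\operatorname{val} = 2$, degree $0$), I would argue inductively along the tail, starting from the end point: any section must vanish on the outermost component, and the conormal/adjunction sequence forces the section to vanish to higher order, so that by descending induction along the chain each interior tail component also lies in the base locus. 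This is where the multiplicities $N_i$ and the "sufficiently divisible $m$" hypothesis enter, since one must control the twist by $\cC_{k,\red}$ and ensure the relevant degrees are exactly computable.

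**The converse: essential components are base-point free.**

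Next I would show that for $m$ sufficiently divisible, $\cL_m$ has no base points along any essential component. For a component $E_i$ with $\operatorname{val}(v_i) + 2g(E_i) - 2 > 0$, the restricted bundle $\omega_{E_i/k}(\sum_j y_{ij})^{\otimes m}$ has positive degree on the curve $E_i$, and for $m \gg 0$ it is base-point free on $E_i$ (this uses that a line bundle of degree $\ge 2g(E_i)+1$ is base-point free, applied after possibly increasing $m$). The remaining case is $\operatorname{val}(v_i) + 2g(E_i) - 2 = 0$, i.e. elliptic starting vertices of valency $0$ or rational starting vertices of valency $2$ that are not interior to a tail: here the restriction is a degree-$0$ bundle, and I would need to check it is actually trivial (or at least effective) so sections survive — this is precisely where Lemma \ref{lemm:factorial} is used, to control the self-intersections $E_i \cdot E_j$ at the log regular singular points of $\cC_k + \cC_{k,\red}$ and confirm the numerical computation. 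Finally I would need a lifting/surjectivity argument: global sections of $\cL_m$ restrict surjectively onto global sections of $\cL_m|_{E_i}$ for essential $E_i$. The natural tool is a vanishing theorem (Kodaira-type or the relative vanishing $R^1 f_* = 0$ for the contraction to the minimal model) applied to the ideal sheaf of $E_i$, reducing surjectivity to the vanishing of an $H^1$ twisted by a nef-and-big or sufficiently positive bundle.

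**Expected main obstacle.**

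I expect the hardest part to be the lifting step — proving that a section of the restriction $\cL_m|_{E_i}$ on an essential component extends to a global section of $\cL_m$ on all of $\cC$ that does not vanish on $E_i$. The base-point-freeness on each individual essential $E_i$ is a genus-based computation, but globalizing it requires controlling the cohomology of $\cL_m$ twisted down by the inessential components and the other essential ones simultaneously; degree-$0$ essential vertices (the boundary case) are the delicate ones since there is no positivity slack, and it is exactly there that the $\Q$-Cartier statement and the self-intersection bound $E_0 \cdot E_1 \le 1$ from Lemma \ref{lemm:factorial} must be invoked to keep the numerical bookkeeping consistent with log regularity.
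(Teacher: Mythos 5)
Your first half is essentially right and matches the paper: by adjunction the restriction of $\omega_{\cC/R}(\cC_{k,\red})^{\otimes m}$ to a component has degree $m(\mathrm{val}(v)+2g(v)-2)$, this is negative at the end point of a tail and zero along its interior, and sections are forced to vanish inward along the chain (in fact the induction is simpler than you suggest: a section of a degree-zero bundle on a rational curve vanishing at one point is zero, so neither the multiplicities $N_i$ nor the divisibility of $m$ are needed here). The genuine gap is in the converse direction, at exactly the step you flag as the hardest. Your dichotomy overlooks that the bundle has degree zero not only on tails but on every \emph{essential} rational component of valency two --- for example all components of an $I_n$-fiber, or any chain of rational curves joining two positive-genus components. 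Near such chains $\omega_{\cC/R}(\cC_{k,\red})^{\otimes m}$ is not ample, so no Serre-type or Kodaira-type vanishing for $H^1$ of the bundle twisted by ideal sheaves is available (and Kodaira vanishing is unavailable anyway, since $k$ may have positive characteristic --- the paper assumes only that $k$ is algebraically closed). Producing a section that is nonvanishing along a degree-zero chain is a global statement, not a consequence of positivity on the individual components, and your sketch offers no mechanism for it. Moreover, when $\cC_{k,\red}$ is an elliptic curve or a loop of rational curves, \emph{every} component has degree zero and your argument says nothing; the paper must invoke the nontrivial result of Liu--Lorenzini--Raynaud that $\omega_{\cC/R}(\cC_{k,\red})^{\otimes m}$ is actually trivial for some $m>0$ in these genus-one cases.

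The paper's actual mechanism, which your proposal only gestures at in the phrase ``contraction to the minimal model,'' is the key missing idea: by Lipman's contractibility criterion one contracts \emph{all} chains of rational curves meeting the rest of $\cC_k$ in one or two points --- the inessential tails and the degree-zero essential chains alike --- to a normal model $\cD$ with rational singularities. An auxiliary horizontal divisor $H$ meeting each tail's end component transversally keeps the log structure on $\cD$ log regular, the contraction $h:\cC\to\cD$ is log \'etale, so $\omega_{\cC^+/S^+}\cong h^*\omega_{\cD^+/S^+}$, and one proves that $\omega^{\otimes m}_{\cD^+/S^+}(-mh_*H)$ is \emph{ample} on $\cD$ by a degree computation; semi-ampleness of the pullback, hence the base-locus statement upstairs, then follows formally, with no lifting or $H^1$-vanishing argument needed. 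Note also that Lemma \ref{lemm:factorial} enters there and not where you place it: it shows $h_*H$ is $\Q$-Cartier (this, together with the genus-one triviality, is where ``sufficiently divisible $m$'' is used) and yields the strict bound $h_*H_0\cdot E<1$, valid because $\cD$ is singular along $h_*H\cap\cD_k$ by minimality of $\cC$, which is precisely what makes the degrees downstairs strictly positive. Without this contraction step, or some substitute handling the degree-zero essential chains, your argument does not prove the theorem.
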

\begin{proof}
  Let $m$ be a positive integer. Using adjunction, one sees that the line bundle $\omega_{\cC/R}(\cC_{k,\red})^{\otimes m}$ has negative degree, resp.~degree $0$, on each rational curve in $\cC_k$ that intersects the other components in precisely one point, resp.~two points.
 It follows at once that the union of the inessential components in $\cC_k$ is contained in the base locus of
 $\omega_{\cC/R}(\cC_{k,\red})^{\otimes m}$. We will show there are no other points in the base locus if $m$ is sufficiently divisible. If $\cC_{k,\red}$ is either an elliptic curve or a loop of rational curves, then  $C$ has genus one and $\omega_{\cC/R}(\cC_{k,\red})^{\otimes m}$ is trivial for some $m>0$ by \cite[5.7 and 6.6]{LLR}. Thus we can discard these cases in the remainder of the proof.

  We can choose a reduced divisor $H$ on $\cC$ with the following properties:
  \begin{itemize}
  \item the divisor $H$ does not contain any prime component of $\cC_k$ (in other words, $H$ is horizontal) and $H+\cC_k$ is a divisor with strict normal crossings;
\item we have $H\cdot E=1$ if $E$ is a prime component of $\cC_k$ that corresponds to the end point of a maximal tail in $\Sk(\cC)$, and $H\cdot E=0$ for every other prime component of $\cC_k$.
    \end{itemize}
 We denote by $S^+$ the scheme $S=\Spec R$ endowed with its standard log structure (the divisorial log structure induced by the closed point of $S$) and by $\cC^+$ the log scheme we obtain by endowing $\cC$ with the divisorial log structure associated with the divisor $\cC_k+H$. Then $\cC^+$ is log regular in the sense of \cite{kato-toric} because $\cC_k+H$ has strict normal crossings.

 By Lipman's generalization of Artin's contractibility criterion \cite[27.1]{lipman}, any chain of rational curves in $\cC_k$ can be contracted to a rational singularity.
     In particular, there exists a morphism  $h:\cC\to \cD$ of normal proper $R$-models of $C$ that contracts precisely
  the rational components of $\cC_k$ that meet the rest of the special fiber in exactly one or two points.
  We endow $\cD$ with the divisorial log structure associated with $\cD_k+h_*H$ and denote the resulting log scheme by $\cD^+$. It follows from
\cite[\S3]{schroer} that $\cD^+$ is still log regular (this is the reason why we added the horizontal divisor $H$). The morphism $h$ induces a morphism of log schemes $h:\cC^+\to \cD^+$, and this morphism is log \'etale since it is a composition of log blow-ups.

 We consider the canonical line bundle $$\omega_{\cC^+/S^+}=\det \Omega^1_{\cC^+/S^+}$$ on $\cC$. It follows easily from \cite[3.3.4]{ErHaNi} that
$\omega_{\cC^+/S^+}$ is isomorphic to $\omega_{\cC/R}(\cC_{k,\red}+H)$.
  We can copy the proofs of \cite[3.3.2 and 3.3.6]{ErHaNi} to show that
the coherent sheaf $\Omega^1_{\cD^+/S^+}$ on $\cD$ is perfect, so that we can define the canonical line bundle
$$\omega_{\cD^+/S^+}=\det \Omega^1_{\cD^+/S^+}$$ on $\cD$ (the results in \cite{ErHaNi} were formulated for $H=0$ but the arguments carry over immediately). Since $h$ is log \'etale, \cite[3.3.6]{ErHaNi} also implies that we have a canonical isomorphism
$\omega_{\cC^+/S^+}\cong h^*\omega_{\cD^+/S^+}$.

By Lemma \eqref{lemm:factorial}, the divisor $h_*H$ is $\Q$-Cartier. Thus by choosing $m$ sufficiently divisible, we can assume that $mh_*H$ is Cartier. We will prove that the line bundle $\omega^{\otimes m}_{\cD^+/S^+}(-mh_*H)$ on $\cD$ is ample. This implies that its pullback to $\cC$ is semi-ample (that is, some tensor power is generated by its global sections). But this pullback is isomorphic to $\omega^{\otimes m}_{\cC^+/S^+}(-h^*h_*mH)$, which is a subbundle of
  $$\omega^{\otimes m}_{\cC^+/S^+}(-mH)\cong \omega_{\cC/R}(\cC_{k,\red})^{\otimes m}$$ that coincides with $\omega_{\cC/R}(\cC_{k,\red})^{\otimes m}$ away from the inessential components of $\cC_k$ (note that, for every closed point $x$ of $h_*H$, the inverse image $h^{-1}(x)$ is a maximal tail of inessential components in $\cC_k$).

Thus it is enough to show that $\omega^{\otimes m}_{\cD^+/S^+}(-mh_*H)$ is ample. By \cite[7.5.5]{liu}, it suffices to show that it has positive degree on every prime component $E$ of $\cD_k$. By adjunction, the restriction of $\omega_{\cD^+/S^+}$ to $E$ is isomorphic to $\omega_{E/k}(F)$ where $F$ is the reduced divisor on $E$ supported on the intersection points of $E$ with the other components of $\cD_k+h_*H$. Note that either $E$ has positive genus, or $F$ consists of at least three points
 including at least two intersections points of $E$ with the other components of $\cD_k$, since we contracted all the other components in $\cC_k$. Therefore, we only need to show that $h_*H_0\cdot E<1$ for every prime component $H_0$ of $H$. This follows from Lemma \ref{lemm:factorial} (note that $\cD$ is singular at every point of $h_*H\cap \cD_k$ by minimality of $\cC$).
\end{proof}
\begin{remark} In the language of \cite{NiXu}, the proof of Theorem \ref{thm:semiample} can also be interpreted as follows: the model for $C$ we obtain from the minimal $snc$-model $\cC$ by contracting all the inessential components in the special fiber is the minimal $dlt$-model of $C$ .
\end{remark}

  If $C$ has semi-stable reduction, we can be more precise: we will show that the logarithmic $2$-canonical  line bundle on the minimal $snc$-model of $C$ is generated by global sections. First, we prove two elementary lemmas.

\begin{lemma}\label{lemm:multvanish}
Let $\cX$ be a regular flat proper $R$-scheme of relative dimension one and let $\mathcal{L}$ be a line bundle on $\cX$. Let $E$ be an irreducible component of multiplicity $N$ in $\cX_k$ and let $a$ be an integer in $\{1,\ldots,N\}$.
 If the restriction of $\mathcal{L}((1-a)E)$ to $E$ has negative degree, then $$H^0(aE,\mathcal{L}|_{aE})=0.$$
\end{lemma}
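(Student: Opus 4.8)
The plan is to filter the thickening $aE$ by its successive infinitesimal neighborhoods $E\subseteq 2E\subseteq\cdots\subseteq aE$ and to control each graded piece. Since $\cX$ is regular of relative dimension one, the irreducible component $E$ is an effective Cartier divisor, so its ideal sheaf $\mathcal{I}=\mathcal{O}_{\cX}(-E)$ satisfies $\mathcal{I}^{i}=\mathcal{O}_{\cX}(-iE)$ and the conormal pieces $\mathcal{I}^{i}/\mathcal{I}^{i+1}\cong \mathcal{O}_E(-iE)$ are line bundles on the integral projective $k$-curve $E$. Twisting the standard quotient sequence $0\to\mathcal{I}^{i}/\mathcal{I}^{i+1}\to\mathcal{O}_{(i+1)E}\to\mathcal{O}_{iE}\to 0$ by the line bundle $\mathcal{L}$ yields, for $1\leq i\leq a-1$, a short exact sequence
$$0\to (\mathcal{L}(-iE))|_E\to \mathcal{L}|_{(i+1)E}\to \mathcal{L}|_{iE}\to 0.$$
Applying the left-exact functor $H^0$ and inducting on $i$, starting from $\mathcal{L}|_E$, reduces the whole statement to showing that $H^0\big(E,(\mathcal{L}(-iE))|_E\big)=0$ for every $i$ in the range $0\leq i\leq a-1$.

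Because $E$ is integral and projective over the field $k$, a line bundle on $E$ has vanishing $H^0$ as soon as it has negative degree, so it suffices to check that $\deg_E (\mathcal{L}(-iE))|_E<0$ for $0\leq i\leq a-1$. Here the one piece of genuine geometry I would invoke is the inequality $E\cdot E=\deg_E\mathcal{O}_E(E)\leq 0$: the special fiber $\cX_k$ is the principal divisor $\divi(t)$ of a uniformizer $t$, so $\mathcal{O}_{\cX}(\cX_k)$ is trivial and $\cX_k\cdot E=0$; writing $\cX_k=NE+\sum_j N_jE_j$ with the remaining prime components $E_j$ meeting $E$ in non-negative intersection numbers then forces $N(E\cdot E)\leq 0$, hence $E\cdot E\leq 0$.

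With $E\cdot E\leq 0$ in hand, the degree $\deg_E(\mathcal{L}(-iE))|_E=\deg_E\mathcal{L}|_E-i(E\cdot E)$ is a non-decreasing function of $i$, so over the range $0\leq i\leq a-1$ it attains its maximum at $i=a-1$, where it equals $\deg_E\mathcal{L}|_E-(a-1)(E\cdot E)=\deg_E(\mathcal{L}((1-a)E))|_E$. By hypothesis this last degree is negative; therefore every graded piece $(\mathcal{L}(-iE))|_E$ with $0\leq i\leq a-1$ has negative degree and hence vanishing $H^0$. Feeding this back through the filtration yields $H^0(aE,\mathcal{L}|_{aE})=0$.

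The argument is almost entirely bookkeeping with the conormal filtration, and I expect the only genuinely delicate point to be the observation that the single hypothesis on $\mathcal{L}((1-a)E)|_E$ governs all of the intermediate graded pieces at once. This is exactly where $E\cdot E\leq 0$ is essential: it makes the relevant degrees monotone in $i$, so that the extreme value $i=a-1$ singled out in the statement is precisely the worst case, and controlling it controls everything in between.
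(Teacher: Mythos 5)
Your proof is correct and follows essentially the same route as the paper's: the paper runs an induction on $a$ using the single exact sequence $0\to \mathcal{L}((1-a)E)|_E\to \mathcal{L}|_{aE}\to \mathcal{L}|_{(a-1)E}\to 0$, which unrolls to exactly your conormal filtration with graded pieces $(\mathcal{L}(-iE))|_E$, and both arguments hinge on the same observation that $E^2\leq 0$ makes the hypothesis at $i=a-1$ the worst case. The only difference is cosmetic: you spell out the standard proof that $E^2\leq 0$ (via $\cX_k\cdot E=0$), which the paper simply asserts.
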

\begin{proof}
 We prove this by induction on $a$. The case $a=1$ is obvious. Assume that $a>1$ and that the property holds for $a-1$.
  If $\mathcal{L}((1-a)E)$ has negative degree on $E$ then the same holds for $\mathcal{L}((b-a)E)$ for all $b\geq 1$ because $E^2\leq 0$.
  We consider the short exact sequence
 $$0\to \mathcal{L}|_{aE}\otimes \mathcal{I}\to \mathcal{L}|_{aE} \to \mathcal{L}|_{(a-1)E}\to 0$$ where $\mathcal{I}$ is the ideal sheaf of $(a-1)E$
 in $aE$. By our induction hypothesis, it suffices to show that
 $$H^0(aE,\mathcal{L}|_{aE}\otimes \mathcal{I})=0.$$ This follows from the isomorphism of $\mathcal{O}_{aE}$-modules
 $$\mathcal{L}|_{aE}\otimes \mathcal{I}\cong \mathcal{L}((1-a)E)|_{E}$$
 on $E$.
\end{proof}

\begin{lemma}\label{lemm:nonposdeg}
Let $\cX$ be a regular flat proper $R$-scheme of relative dimension one and let $\mathcal{L}$ be a line bundle on $\cX$.
 Let $D$ be a reduced connected divisor supported on $\cX_k$. Suppose that the restriction of $\mathcal{L}$ to each component in $D$ has non-positive degree, and that this degree is negative for at least one component. Then $$H^0(D,\mathcal{L}|_D)=0.$$
\end{lemma}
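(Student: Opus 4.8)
The plan is to take an arbitrary global section $s \in H^0(D,\mathcal{L}|_D)$ and show that it must vanish identically, propagating vanishing from a negative-degree component across the whole of $D$ using connectedness. Write $D = E_1 \cup \cdots \cup E_r$ for the decomposition into prime components; each $E_i$ is an integral proper curve over $k$ (properness of $\cX$ over $R$ is what I would use here). Since $D$ is reduced, the canonical map $\mathcal{O}_D \to \prod_{i} \mathcal{O}_{E_i}$ is injective, so a section of a line bundle on $D$ is zero as soon as its restriction to each $E_i$ is zero; it therefore suffices to prove $s|_{E_i}=0$ for all $i$.

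The one elementary input I would isolate first is a degree estimate on an integral proper curve $E$ over $k$: if $t \in H^0(E,M)$ is a nonzero section of a line bundle $M$, then $t$ is a regular section, so $t\colon \mathcal{O}_E \to M$ is injective with torsion cokernel $Q$, and $\deg M = \mathrm{length}(Q) \geq 0$; moreover, if $t$ vanishes at a closed point $p$, then $Q_p \neq 0$, so $\deg M \geq 1$. Phrased contrapositively: a line bundle of negative degree on $E$ has no nonzero section, and a line bundle of degree $\leq 0$ has no nonzero section vanishing at a closed point. This formulation via the length of the cokernel is what makes the argument insensitive to possible singularities of the components $E_i$.

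With this in hand, the propagation runs as follows. By hypothesis some component, say $E_1$, satisfies $\deg(\mathcal{L}|_{E_1}) < 0$, so $s|_{E_1}=0$. Since $D$ is connected, I would order the remaining components $E_2,\ldots,E_r$ so that each $E_l$ (for $l \geq 2$) meets $E_1 \cup \cdots \cup E_{l-1}$ in at least one closed point $p_l$. Arguing by induction, suppose $s$ vanishes on $E_1 \cup \cdots \cup E_{l-1}$; then the value of $s$ in the fiber $\mathcal{L}|_{p_l}$, read off along whichever earlier component passes through $p_l$, is zero, so $s|_{E_l}$ is a section of $\mathcal{L}|_{E_l}$ of degree $\leq 0$ that vanishes at $p_l$. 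The degree estimate then forces $s|_{E_l}=0$. Hence $s$ vanishes on every component, so $s=0$ and $H^0(D,\mathcal{L}|_D)=0$.

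The step I expect to require the most care is the transfer of vanishing across an intersection point: one must check that the value $s(p_l)$ in the fiber $\mathcal{L}|_{p_l}$ is well-defined independently of the component along which it is computed, so that $s|_{E_l}$ genuinely vanishes at $p_l$. This is precisely where it matters that $s$ is a section on $D$ and not merely a tuple of sections on the disjoint components: restriction to the residue field at $p_l$ factors through each branch through $p_l$, so the two readings agree. Everything else is routine, and strict negativity of the degree is used only to start the induction at $E_1$.
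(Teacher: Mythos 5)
Your proof is correct, but it runs on a different engine than the paper's. The paper argues by induction on the number $r$ of components: choosing a component $E$ with $\deg(\mathcal{L}|_E)<0$, every global section vanishes on $E$ and hence is a section of the twist $\mathcal{L}(-E)$; since $E\cdot E_i>0$ for each component $E_i$ meeting $E$, that twist has strictly negative degree on every neighbor of $E$, so the induction hypothesis applies to each connected component of $D-E$. This leans on the ambient regular scheme $\cX$ ($E$ is Cartier, and one uses the intersection numbers $E\cdot E_i$), and its mechanism deliberately mirrors the twisting trick in Lemma \ref{lemm:multvanish}. You avoid twisting altogether: your key input is the pointwise fact that on an integral proper curve over $k$ a nonzero section of a line bundle of degree $\leq 0$ cannot vanish at any closed point (via $\deg M=\length(\coker)$, a formulation that indeed handles singular components correctly), and you propagate vanishing component by component through intersection points, ordered using connectedness; the well-definedness of the fiber value $s(p_l)$, which you rightly flag as the delicate step, holds for exactly the reason you give. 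What your route buys: it uses $\cX$ only through properness, so it actually proves the statement for any reduced connected proper curve over $k$ with no ambient regular model, and it needs just one intersection point per step rather than simultaneous control of all components meeting $E$. What the paper's route buys: a shorter induction and uniformity with the companion Lemma \ref{lemm:multvanish}, at the cost of invoking intersection theory on $\cX$.
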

\begin{proof}
This follows easily by induction on the number $r$ of irreducible components of $D$. If $r=1$ the result is obvious. Suppose that $r>1$ and let $E$ be a component of $D$ on which $\mathcal{L}$ has negative degree. Then every section of $\mathcal{L}$ on $D$ vanishes on $E$, so that it is also a section of $\mathcal{L}(-E)$ on $D$. The line bundle
 $\mathcal{L}(-E)$ has negative degree on each irreducible component of $D$ that intersects $E$, so that we can apply the induction hypothesis to this line bundle and to every connected component of $D-E$.
\end{proof}

\begin{theorem}\label{thm:sstable}
 Let $C$ be a $K$-curve of genus $g(C)\geq 1$ with semi-stable reduction and let $\cC$ be its minimal $snc$-model.
 Then the logarithmic $2$-canonical line bundle $\omega_{\cC/R}(\cC_{k,\red})^{\otimes 2}$ on $\cC$ is generated
 by its global sections.
\end{theorem}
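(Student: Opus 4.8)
The plan is to verify global generation point by point on the special fibre, reducing everything to a single degree computation in which the factor $2$ is exactly what is needed, together with the two vanishing lemmas just proved. Write $\mathcal{A}=\omega_{\cC/R}(\cC_{k,\red})$ and $\mathcal{L}=\mathcal{A}^{\otimes 2}$. Since $\cC$ is proper over the complete (hence henselian) ring $R$, the locus on $\cC$ where $\mathcal{L}$ fails to be globally generated is closed and, if nonempty, must meet $\cC_k$; so it suffices to produce, for each closed point $x\in\cC_k$, a global section of $\mathcal{L}$ not vanishing at $x$. I first recall the shape of $\cC_k$ in the semi-stable case: by the discussion in \ref{sss:tails} there are no inessential components, and every prime component is either reduced (of arbitrary genus) or a rational curve of multiplicity $2$ obtained by blowing up a self-intersection point of the minimal regular model, in which case it meets the reduced part in a single loop.

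The key computation is the restriction of $\mathcal{L}$ to a prime component $E$. By adjunction, exactly as in the proof of Theorem~\ref{thm:laplacian}, this restriction is $\omega_{E/k}(y_1+\dots+y_b)^{\otimes 2}$, where $y_1,\dots,y_b$ are the points at which $E$ meets the other components, so its degree is $2(2g(E)-2+b)$. This is $0$ precisely when $E$ is rational and meets the rest of the fibre in two points (in particular for the multiplicity-$2$ components), in which case the restriction is trivial; in all remaining cases $g(E)+b\geq 2$, so the degree is at least $2g(E)$. In either case $\mathcal{L}|_E$ is globally generated. This is exactly where $m=2$ enters: the log-canonical $\mathcal{A}$ only has degree $\geq 2g(E)-2$ on $E$, which need not be base-point-free, whereas squaring pushes the degree into the base-point-free range. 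The case $g(C)=1$ is disposed of separately: then every component is flat, so $\mathcal{A}$ restricts to degree $0$ on each component of $\cC_k$; being trivial on the generic fibre it is a vertical line bundle, and Zariski's lemma together with the presence of a reduced component forces $\mathcal{A}\cong\mathcal{O}_{\cC}$ (alternatively one invokes \cite{LLR} as in Theorem~\ref{thm:semiample}), so $\mathcal{L}$ is trivial. I therefore assume $g(C)\geq 2$ from now on.

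To pass from the components to all of $\cC$, I would argue in two steps. The sequence $0\to\mathcal{L}\xrightarrow{\,t\,}\mathcal{L}\to\mathcal{L}|_{\cC_k}\to 0$, the theorem on formal functions and the completeness of $R$ show that $H^0(\cC,\mathcal{L})\to H^0(\cC_k,\mathcal{L}|_{\cC_k})$ is surjective as soon as $H^1(\cC_k,\mathcal{L}|_{\cC_k})=0$; and $\mathcal{L}|_{\cC_k}$ is globally generated at a closed point $x$ as soon as $H^1(\cC_k,\mathcal{L}|_{\cC_k}(-x))=0$ (with $\mathfrak{m}_x$ in place of $\mathcal{O}(-x)$ when $x$ is a node). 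Composing the two surjections yields $H^0(\cC,\mathcal{L})\twoheadrightarrow\mathcal{L}\otimes k(x)$, as desired. Since $\cC_k$ is a projective Gorenstein curve whose dualizing sheaf is the restriction $\omega_{\cC/R}|_{\cC_k}$ of the relative dualizing sheaf, Serre duality converts these vanishings into $H^0(\cC_k,\mathcal{G})=0$ and $H^0(\cC_k,\mathcal{G}(x))=0$, where $\mathcal{G}=\omega_{\cC/R}^{-1}(-2\cC_{k,\red})|_{\cC_k}$. These are precisely the statements supplied by Lemmas~\ref{lemm:nonposdeg} and~\ref{lemm:multvanish}: the former runs an induction over the reduced fibre $\cC_{k,\red}$, peeling off the components on which $\mathcal{G}$ has negative degree (a section must vanish on such a component, hence at the adjacent nodes), and the latter climbs from $\cC_{k,\red}$ to the non-reduced scheme $\cC_k$ through the multiplicity-$2$ thickenings.

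The main obstacle is that $\mathcal{G}$ need not have non-positive degree on every component — it can be positive, for instance, on a reduced rational curve all of whose neighbours have multiplicity two — so Lemma~\ref{lemm:nonposdeg} does not apply verbatim. The device that makes the induction run is that, once a prospective section is known to vanish on the negative components, it must vanish at all the nodes joining them to the rest of the fibre, and this forced vanishing lowers the effective degree on the neighbouring (a priori positive) components until they too become negative. The hypothesis $g(C)\geq 2$ guarantees that $\cC_{k,\red}$ carries at least one component on which $\mathcal{G}$ is negative to start the cascade (otherwise the fibre would be a single cycle of flat rational curves and $C$ would have genus one), and the special geometry of the semi-stable minimal $snc$-model keeps the combinatorics of the cascade under control. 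Arranging the order of peeling so that the hypotheses of the two lemmas are genuinely met, and tracking the multiplicities through the non-reduced thickenings, is the technical heart of the proof; the factor $2$ enters only through the degree computation of the second paragraph, which is why $2$-canonical forms already suffice.
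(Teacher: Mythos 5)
Your overall strategy is the same as the paper's (reduce global generation to an $H^1$-vanishing on the special fiber via the exact sequence $0\to\mathcal{L}\xrightarrow{t}\mathcal{L}\to\mathcal{L}|_{\cC_k}\to 0$ and Nakayama, dualize on the fiber, and invoke Lemmas \ref{lemm:multvanish} and \ref{lemm:nonposdeg}), but two steps do not work as you state them. First, the duality step: when $x$ is a singular point of $\cC_k$ --- and on the minimal $snc$-model \emph{every} point of a multiplicity-two component is a non-reduced, hence singular, point of $\cC_k$ --- the sheaf $\mathfrak{m}_x$ is not invertible, so Serre duality identifies the dual of $H^1(\cC_k,\mathcal{L}|_{\cC_k}\otimes\mathfrak{m}_x)$ with $\Hom(\mathcal{L}|_{\cC_k}\otimes\mathfrak{m}_x,\omega_{\cC_k/k})$, which is \emph{not} $H^0(\cC_k,\mathcal{G}(x))$; your parenthetical ``with $\mathfrak{m}_x$ in place of $\mathcal{O}(-x)$'' flags the problem, but the rest of the argument ignores it. The paper circumvents this precisely by blowing up at $x$: global generation at $x$ becomes surjectivity of $H^0(\cD,h^*\omega^{\otimes 2}_{\cC'/R})\to H^0(E_0,\cdot)$, the ideal of $x$ is replaced by the line bundle $\mathcal{O}_{\cD}(-E_0)$ on a regular surface, and duality then only ever involves line bundles. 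Second, the paper's opening move --- replacing the minimal $snc$-model by the minimal $nc$-model $\cC'$, whose special fiber is reduced for a semi-stable curve, and descending the log-canonical bundle through the log blow-ups $\cC\to\cC'$ --- eliminates the multiplicity-two components before any degree computation begins; by working directly on $\cC$ you carry them along, and this is exactly what produces the positive-degree components of $\mathcal{G}$ that you acknowledge obstruct Lemma \ref{lemm:nonposdeg}.

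That acknowledgment is the decisive gap: the vanishings $H^0(\cC_k,\mathcal{G})=0$ and $H^0(\cC_k,\mathcal{G}(x))=0$ are the whole content of the theorem, and your ``cascade'' of forced vanishings is only sketched, with the order of peeling and the multiplicity bookkeeping explicitly deferred as ``the technical heart.'' In the paper that heart is an explicit case analysis on the blow-up $\cD$: Lemma \ref{lemm:multvanish} first kills sections on $N_0E_0$, the projection formula gives $\deg(\mathcal{L}'|_{E_i})=2-2p_a(E_i)+E_i^2+(3-N_0)E_0\cdot E_i$, and minimality of the model (no $(-1)$-curves) together with $g(C)\geq 2$ shows that at most one component carries positive degree (degree one, with $h(E_i)$ a regular rational $(-2)$-curve through $x$), a configuration tight enough that Lemma \ref{lemm:nonposdeg} applies to the complement. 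Nothing in your sketch substitutes for this analysis, and in your setting it would be strictly harder, since the cascade must also traverse the multiplicity-two chains (where, incidentally, your structural claim is off: the exceptional curve over a self-intersection point of the minimal regular model meets the strict transform in two distinct points, subdividing the loop, rather than ``meeting the reduced part in a single loop''). The proposal is salvageable by inserting the paper's two devices --- pass to the minimal $nc$-model first, then blow up at the point being tested --- but as written the main step is asserted, not proved.
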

\begin{proof}
 It will be convenient to start from the minimal $nc$-model $\cC'$ of $C$ instead of the minimal $snc$-model $\cC$. We will show that
 $\omega_{\cC'/R}^{\otimes 2}$ is generated by global sections. This implies the desired result: the line bundle $\omega_{\cC/R}(\cC_{k,\red})$ is isomorphic to the pullback of $\omega_{\cC'/R}\cong \omega_{\cC'/R}(\cC'_k)$ through the morphism $g:\cC'\to \cC$, since $\cC'_k$ is reduced and $\cC$ is a composition of log blow-ups if we endow both
 models with the divisorial log structure associated with their special fibers.
  We can assume that $C$ has genus at least $2$, since otherwise, $\omega_{\cC'/R}$ is trivial.

 Let $x$ be a closed point of $\cC'_k$ and denote by $h:\cD\to \cC'$ the blow-up of $\cC'$ at $x$ and by $E_0$ the exceptional curve of $h$.
     Then $\omega_{\cC'/R}^{\otimes 2}$ is globally generated at $x$ if and only if the morphism
 $$H^0(\cD,h^*\omega_{\cC'/R}^{\otimes 2})\to H^0(E_0,h^*\omega_{\cC'/R}^{\otimes 2}|_{E_0})$$ is surjective. To prove this surjectivity property, it suffices to show that
$H^1(\cD,h^*\omega_{\cC'/R}^{\otimes 2}(-E_0))$ vanishes. By Serre duality, this is equivalent to showing that $H^0(\cD_k,\mathcal{L})=0$,  with
$$\mathcal{L}=(\omega_{\cD/R}\otimes (h^*\omega_{\cC'/R}^{-2})(E_0))|_{\cD_k}\cong (\omega^{-1}_{\cD/R}(3E_0))|_{\cD_k}.$$
We write $$\cD_k=N_0E_0+\sum_{i=1}^r E_i,$$ where $N_0$ is either one or two, depending on whether $x$ is a regular or singular point of $\cC'_k$.

 We first observe that the restriction of $\mathcal{L}$ to $N_0E_0$ has no non-zero global sections, because the restriction of the line bundle $\mathcal{L}((1-N_0)E_0)$ to $E_0$ has negative degree so that we can apply Lemma \ref{lemm:multvanish}.
  Thus every section of $\mathcal{L}$ on $\cD_k$ is also a section of $$\mathcal{L}'=(\omega^{-1}_{\cD/R}((3-N_0)E_0))|_{\cD_k}.$$ Note that $\mathcal{L}'$ has degree $-1$ on $E_0$ if $N_0=1$ and degree $0$ if $N_0=2$.
 Next, we consider any component $E_i\neq E_0$ in $\cD_k$. By the adjunction formula, the degree of $\mathcal{L}'$ on $E_i$ is given by
 \begin{equation}\label{eq:degree}
  \mathrm{deg}(\mathcal{L}'|_{E_i})=2-2p_a(E_i)+E_i^2+(3-N_0)E_0\cdot E_i.
  \end{equation}
  By the projection formula, $E_i^2=h(E_i)^2-\delta$, where
 \begin{itemize}
  \item$\delta=0$ if $x$ does not lie on $h(E_i)$,
   \item $\delta=1$ if $x$ is a regular point of $h(E_i)$ (then $E_0\cdot E_i=1$),
   \item $\delta=4$ if $x$ is a self-intersection point of $h(E_i)$ (then $N_0=2$ and $E_i\cdot E_0=2$).
   \end{itemize}
   Thus if
  $E_i^2+(3-N_0)E_0\cdot E_i$ is positive, we have  $\delta=1$ and $h(E_i)^2=0$, which means that $\cC'_k=h(E_i)$ and $p_a(E_i)=p_a(h(E_i))\geq 2$ by our assumption on the genus of $C$. Note also that $E_i^2+(3-N_0)E_0\cdot E_i= 0$ implies that $\delta=1$ and $h(E_i)^ 2=-1$, and thus $p_a(E_i)=p_a(h(E_i))>0$ since otherwise, $h(E_i)$ would be an exceptional curve on $\cC'$, contradicting minimality.

   It follows that the number in \eqref{eq:degree} is negative, unless:
 \begin{itemize}
 \item $p_a(E_i)=1$ and $E_i^2+(3-N_0)E_0\cdot E_i= 0$, or
  \item $p_a(E_i)=0$ and $E_i^2+(3-N_0)E_0\cdot E_i\in \{-2,-1\}$.
  \end{itemize}
If $p_a(E_i)=1$ and $E_i^2+(3-N_0)E_0\cdot E_i= 0$ then $h(E_i)$ is a $(-1)$-curve of arithmetic genus one and  $x$ is a point on $h(E_i)$ that does not lie on any other component of $\cC'_k$.
  Similarly, if $p_a(E_i)=0$ and $E_i^2+(3-N_0)E_0\cdot E_i=-1$ then $h(E_i)$ must be a regular rational $(-2)$-curve and $x$ is a point on $h(E_i)$ that does not lie on any other component of $\cC'_k$. Finally, if $p_a(E_i)=0$ and $E_i^2+(3-N_0)E_0\cdot E_i=-2$ then $h(E_i)$ contains $x$ or $h(E_i)$ is a regular rational curve of self-intersection number $-2$.

  From these observations, we can deduce the following properties:
  \begin{itemize}
  \item The divisor $\cD_k$ contains at most one component $E_i$ on which $\mathcal{L}'$ has positive degree.
   In that case, this degree equals one, and $h(E_i)$ is a regular rational $(-2)$-curve and it is the only component of $\cC'_k$ that contains $x$.
   Then each connected component of $\cD_k-N_0E_0-E_i$ contains a curve on which $\mathcal{L}'$ has negative degree, since such a component cannot consist entirely of regular rational $(-2)$-curves. It follows from Lemma \ref{lemm:nonposdeg} that $\mathcal{L}'$ has no non-zero global sections on $\cD_k-N_0E_0-E_i$. Then $\mathcal{L}$ has no non-zero global sections on $\cD_k$, because every section vanishes at the two intersection points of $E_i$ with $\cD_k-N_0E_0$.

  \item Assume that $\mathcal{L}'$ has non-positive degree on every component of $\cD_k$. The divisor $\cD_k$ contains at least one component $E_i$ on which $\mathcal{L}'$ has negative degree: if $x$ lies on only one component of $\cC'_k$ then we can take $E_i=E_0$. In the other case, all components of $\cD_k$ on which $\mathcal{L}'$ has degree zero are regular rational curves that intersect the rest of $\cD_k$ in precisely two points, and $\cD_k$ cannot consist entirely of such curves because of our assumption that $g(C)\geq 2$. Thus Lemma \ref{lemm:nonposdeg} again implies that $\mathcal{L}'$ has no non-zero global sections on $\cD_{k,\red}$, so that
      $H^0(\cD_k,\mathcal{L})=0$.
  \end{itemize}
This concludes the proof.
 \end{proof}

\begin{remark}
Theorem \ref{thm:sstable} also follows from Theorem 7 in \cite{lee}, which states that $\omega^{\otimes m}_{\cC_{\min}/R}$ is generated by global sections if $m\geq 2$ and $\cC_{\min}$ is the minimal regular model of a curve of genus $g\geq 2$ (recall that the minimal regular model of a curve with semi-stable reduction coincides with its minimal $nc$-model). Although we have only dealt with the semi-stable case, we feel that our proof of Theorem \ref{thm:sstable} is still interesting, because it uses a different method and it is substantially simpler than the proof of the more general result in \cite{lee}. It does not seem possible to deduce Theorem \ref{thm:semiample} from the semi-ampleness of
 $\omega_{\cC_{\min}/R}$ in a direct way, because of the discrepancy between the minimal regular model and the minimal $nc$-model of $C$ if $C$ does not have semi-stable reduction.
\end{remark}

\begin{theorem}\label{thm:essentialsk}
Let $C$ be a $K$-curve of genus $g(C)\geq 1$ and let $\cC$ be its minimal $snc$-model.
\begin{enumerate}
\item \label{it:esssk} The essential skeleton $\Sk(C)$ is equal to the combinatorial skeleton of $\Sk(\cC)$ (as a subspace of $C^{\an}$). In particular, $\Sk(C)$ is a strong deformation retract of $C^{\an}$.
\item \label{it:esssk-sst} If $C$ has semi-stable reduction, then $\Sk(C)=\Sk(\cC)$. Moreover,
 $$\Sk(C)=\bigcup_{\omega} \Sk(C,\omega)$$ where $\omega$ runs through the set of non-zero regular $2$-canonical forms on $C$.
\end{enumerate}
\end{theorem}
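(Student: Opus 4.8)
The plan is to prove Theorem~\ref{thm:essentialsk} by combining the combinatorial description of Kontsevich-Soibelman skeleta from \eqref{sss:KSsk} with the base-locus computation of Theorem~\ref{thm:semiample}. For part~\eqref{it:esssk}, recall that by definition $\Sk(C)=\bigcup_\omega \Sk(C,\omega)$, where $\omega$ runs over non-zero regular pluricanonical forms, and that each $\Sk(C,\omega)$ is the union of the $\omega$-essential faces of $\Sk(\cC)$. The key observation is that a vertex $v$ corresponding to a component $E$ fails to be $\omega$-essential precisely when the value $\nu_E/N_E$ of the weight function exceeds its minimum, which by Proposition~\ref{prop:defwt} means $\wt_\omega(v)$ is not minimal. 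So I would first show that a vertex $v$ lies in the combinatorial skeleton if and only if $v$ is $\omega$-essential for some regular $m$-canonical $\omega$. The inessential components are exactly those contracted in passing to the combinatorial skeleton, so the containment $\Sk(C)\subseteq \text{(combinatorial skeleton)}$ should follow once I argue that every inessential component lies in the base locus of $\omega_{\cC/R}(\cC_{k,\red})^{\otimes m}$: if $E$ is in the base locus then every regular $\omega$ vanishes along $E$ to order strictly greater than the minimal weight, forcing $v$ to be non-essential for every $\omega$, hence $v\notin \Sk(C,\omega)$ for all $\omega$.

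For the reverse containment, I would use the non-trivial direction of Theorem~\ref{thm:semiample}: for sufficiently divisible $m$, the base locus of $\omega_{\cC/R}(\cC_{k,\red})^{\otimes m}$ is \emph{exactly} the union of inessential components. Thus if $v$ lies in the combinatorial skeleton (so $E$ is an essential component, i.e.\ $E$ is not in the base locus), there is a global section of the pluricanonical bundle not vanishing identically on $E$; choosing $\omega$ to be (a rational form inducing) this section, the multiplicity $\nu_E$ attains the minimal possible value relative to $N_E$, so $v$ becomes $\omega$-essential and lies in $\Sk(C,\omega)\subseteq \Sk(C)$. Here I must be careful to translate ``not in the base locus'' into ``$\nu_E/N_E$ is minimal for a suitable choice of $\omega$,'' which amounts to checking that sections of the logarithmic pluricanonical bundle correspond exactly to regular pluricanonical forms $\omega$ with $\mathrm{div}_{\cC}(\omega)\geq 0$, and that the minimum of $\nu_i/N_i$ over such $\omega$ is achieved with equality on any non-base-locus component. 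To promote the statement from vertices to faces, I would note that $\Sk(C,\omega)$ is a union of closed faces and that an edge is $\omega$-essential when both endpoints are essential and the edge carries no horizontal part of $\mathrm{div}_\cC(\omega)$; since regular forms can be chosen with empty horizontal divisor over the edges of interest, edges between essential vertices survive, matching the edges of the combinatorial skeleton. The final clause of~\eqref{it:esssk}, that $\Sk(C)$ is a strong deformation retract, then follows because the combinatorial skeleton is a deformation retract of $\Sk(\cC)$ (collapsing the contractible maximal tails), and $\Sk(\cC)$ is a strong deformation retract of $C^{\an}$ via $\rho_\cC$.

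For part~\eqref{it:esssk-sst}, I would invoke the last observation in \eqref{sss:tails}: when $C$ has semi-stable reduction, the minimal $snc$-model has \emph{no} inessential components, since the end point of any tail would be a $(-1)$-curve contradicting minimality. Consequently the combinatorial skeleton equals all of $\Sk(\cC)$, and part~\eqref{it:esssk} immediately gives $\Sk(C)=\Sk(\cC)$. For the refinement to $2$-canonical forms, I would appeal to Theorem~\ref{thm:sstable}, which asserts that $\omega_{\cC/R}(\cC_{k,\red})^{\otimes 2}$ is globally generated. Global generation means the base locus is empty, so for every vertex $v$ there is a global section (hence a regular $2$-canonical form $\omega$) not vanishing on the corresponding component, making $v$ $\omega$-essential exactly as above; running over all vertices and the edges between them yields $\Sk(\cC)=\bigcup_\omega \Sk(C,\omega)$ with $\omega$ ranging over regular $2$-canonical forms.

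I expect the main obstacle to be the precise bookkeeping in the vertex argument: translating the scheme-theoretic statement ``$E$ is not in the base locus of the logarithmic pluricanonical bundle'' into the combinatorial statement ``$\nu_E/N_E$ attains the global minimum for some regular $\omega$.'' This requires identifying global sections of $\omega_{\cC/R}(\cC_{k,\red})^{\otimes m}$ with regular $m$-canonical forms whose associated divisor $\mathrm{div}_\cC(\omega)$ is effective, and verifying that a section not vanishing on $E$ forces $\nu_E$ to equal the minimal multiplicity $m$ (the value forced on a component meeting the generic point), while every component carries multiplicity at least this minimum. Once this dictionary between base loci and essential vertices is established cleanly, the rest of the argument is a matter of assembling the combinatorial pieces and the topological retraction, and the two reductions to Theorems~\ref{thm:semiample} and~\ref{thm:sstable} do the real work.
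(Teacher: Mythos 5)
Your reverse inclusion and your treatment of part~\eqref{it:esssk-sst} track the paper's proof closely (a global section of $\omega_{\cC/R}(\cC_{k,\red})^{\otimes m}$ not vanishing at a singular point $x$ of $\cC_{k,\red}$ away from the inessential components has $\nu_E=0$ on both adjacent components, so $\wt_\omega$ vanishes on the corresponding edge and is non-negative on $\Sk(\cC)$; semi-stability kills inessential components by the observation in \eqref{sss:tails}, and Theorem~\ref{thm:sstable} supplies the $2$-canonical refinement). But your forward inclusion $\Sk(C)\subseteq(\text{combinatorial skeleton})$ has a genuine gap. You derive it from the easy half of Theorem~\ref{thm:semiample} --- every inessential component $E$ lies in the base locus --- claiming that every regular $\omega$ then ``vanishes along $E$ to order strictly greater than the minimal weight.'' That inference is invalid: base-locus membership compares $\nu_E$ with $0$ (it gives $\nu_E\geq 1$ for every global section, hence $\wt_\omega(v_E)\geq 1/N_E$), whereas $\omega$-essentiality compares $\nu_E/N_E$ with the global minimum $\min_i \nu_i/N_i$, which need not be $0$ when $\cC_k$ is non-reduced --- and the tails you must rule out typically carry multiplicities $\geq 2$. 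Concretely, in Example~\ref{sss:exam} (type II elliptic curve) the minimum of the weight function is $5/6$, while the base-locus bound for the tail component of multiplicity $3$ only yields $\nu/N\geq 1/3<5/6$; that this vertex is non-essential (its actual weight is $1$) needs more than base-locus membership. Normalizing $\omega$ by a scalar in $K^{\times}$ so that all $\nu_i\geq 0$ does not rescue the argument: the minimum then lies in $[0,1)$ but can be strictly positive.

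The paper closes exactly this gap with Corollary~\ref{cor:bound} rather than with base loci: since $\Delta(\wt_\omega|_{\Sk(\cC)})=mK_{\Sk(\cC)}-(\rho_{\cC})_*(\mathrm{div}_C(\omega))\leq mK_{\Sk(\cC)}$ for regular $\omega$, the degree of the Laplacian at the genus-zero valency-$1$ end point $v_n$ of a tail is at most $-mN(v_n)<0$, and at the genus-zero valency-$2$ interior vertices it is at most $0$; hence $\wt_\omega$ is concave along each tail and strictly increasing from the starting point to the end point, for every non-zero regular pluricanonical $\omega$. This excludes all tail points from the minimum locus $\Sk(C,\omega)$ at once (your reduction to vertices via the fact that an edge is $\omega$-essential only if both endpoints are would also suffice, once the vertex statement is correct). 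With this substitution, the rest of your outline --- the dictionary between global sections and regular forms with $\mathrm{div}_{\cC}(\omega)\geq 0$, the choice of sections per singular point, and the deformation retraction obtained by collapsing the contractible tails --- assembles into the paper's argument.
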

\begin{proof}
\eqref{it:esssk} It follows from Corollary \ref{cor:bound} that, for every non-zero regular pluricanonical form $\omega$ on $C$, the weight function $\wt_{\omega}$ is strictly increasing along every tail of $\Sk(\cC)$ if we orient the tail from its starting point to its end point. Thus the essential skeleton $\Sk(C)$ is contained in the combinatorial skeleton of $\Sk(\cC)$. The converse inclusion is a consequence of Theorem \ref{thm:semiample}: we choose a positive integer $m$ such that the base locus of  $\omega_{\cC/R}(\cC_{k,\red})^{\otimes m}$ is the union of  inessential components of $\cC_k$. If $x$ is a singular point of $\cC_{k,\red}$ that does not lie on an inessential component and $\omega$ is a global section of $\omega_{\cC/R}(\cC_{k,\red})^{\otimes m}$ that does not vanish at $x$, then the weight function $\wt_{\omega}$ vanishes on the edge of $\Sk(\cC)$ and it is non-negative on the whole skeleton $\Sk(\cC)$, so that the edge belongs to $\Sk(C,\omega)$. Thus the combinatorial skeleton of $\Sk(\cC)$ is equal to $$\bigcup_{\omega} \Sk(C,\omega)$$ where $\omega$ runs through any basis of the $R$-module
$$H^0(\cC,\omega_{\cC/R}(\cC_{k,\red})^{\otimes m}).$$

\eqref{it:esssk-sst} As we have already observed in \eqref{sss:tails}, the special fiber of $\cC$ does not contain any inessential components. Therefore, the combinatorial skeleton of $\Sk(\cC)$ is equal to $\Sk(\cC)$ and thus also to the essential skeleton $\Sk(C)$ by point \eqref{it:esssk}. The proof of \eqref{it:esssk}, together with Theorem \ref{thm:sstable}, shows that $2$-canonical forms $\omega$ suffices to generate the whole essential skeleton $\Sk(C)$.
\end{proof}

\subsection{The subset of the essential skeleton cut out by canonical forms}

\sss Let $C$ be a $K$-curve of genus $g\geq 1$ and denote by $\cC$ its minimal $snc$-model. We assume that $\cC_k$ is reduced.
 Looking at the definition of the essential skeleton in \eqref{sss:KSsk0}, it is natural to ask which part of the essential skeleton we recover by
 taking the union of the Kontsevich-Soibelman skeleta $\Sk(C,\omega)$ where $\omega$ runs through the set of non-zero canonical (rather than pluricanonical) forms on $C$.
 In this section, we will show that one obtains the union of all the closed {\em non-bridge edges} and all the vertices of positive genus of the skeleton $\Sk(\cC)$.
 Recall that a {\em bridge} in a graph $G$ is an edge that is not contained in any non-trivial cycle, or equivalently, that is contained in every spanning tree.

\sss Let $G=G(\cC_k)$ be the dual graph of the special fiber $\cC_k$, and let $\nu:\widetilde{\cC}_k\to \cC_k$
  be a normalization morphism.
 The set $V(G)$ of vertices $v$ of $G$ is in bijection with the set of connected components $C_v$ of  $\widetilde{\cC}_k$.
Let $E(G)$ denote the set of edges of $G$, each endowed with a fixed (but arbitrary) orientation. If $x$ is the singular point of $\cC_{k}$ corresponding to an edge $e$, then the choice of an orientation on $e$ amounts to choosing a point $y$ in $\nu^{-1}(x)$: if the oriented edge $\vec{e}$ points towards the vertex $v$, then we take $y$ to be the unique point of  $\nu^{-1}(x)$ lying on $C_v$.

\sss \label{sss:rosenlicht} By cohomological flatness of $\cC\to \Spec(R)$ and Grothendieck-Serre duality, the module $H^1(\cC,\omega_{\cC/R})$ is free, so that $$H^0(\cC_k,\omega_{\cC / R}) \otimes k \cong H^0(\cC_k,\omega_{\cC_k / k}).$$
 We can identify $H^0(\cC_k,\omega_{\cC_k / k})$  with the space of {\em Rosenlicht differentials} on $\cC_k$.
A Rosenlicht differential $\omega$ is, by definition, the data of a meromorphic differential $\omega_v$ on $C_v$ for each $v \in V(G)$ such that:
\begin{enumerate}
\item Each $\omega_v$ has at worst logarithmic poles at the inverse images under $\nu$ of the singular points of $\cC_k$, and it is regular everywhere else.
\item If  $x$ is a singular point of $\cC_k$ and $\nu^{-1}(x)=\{y_1,y_2\}$, then the residues of $\omega$ at $y_1$ and $y_2$ sum to zero.
\end{enumerate}
 Given $\omega \in H^0(\cC_k,\omega_{\cC_k / k})$ and an oriented edge $\vec{e} \in E(G)$, let ${\rm res}_{\vec{e}}(\omega)$ be the residue of $\omega$ at the point of $\widetilde{\cC}_k$ corresponding to $\vec{e}$.
By the residue theorem, the sum
$${\rm res}(\omega) := \sum_{e \in E(G)} {\rm res}_{\vec{e}}(\omega) (\vec{e})$$
belongs to $H_1(G,k)$, so that we obtain a morphism of $k$-vector spaces
$$\mathrm{res}:H^0(\cC_k,\omega_{\cC_k / k}) \to H_1(G,k)$$ which is called the residue map.

\begin{lemma} \label{lem:residue_ses}
The residue map fits into a short exact sequence of $k$-vector spaces
$$\minCDarrowwidth15pt\begin{CD} 0 @>>> \oplus_{v \in V(G)} H^0(C_v, \omega_{C_v / k}) @>\alpha>> H^0(\cC_k,\omega_{\cC_k / k}) @>\mathrm{res}>> H_1(G,k) @>>> 0.\end{CD}$$
\end{lemma}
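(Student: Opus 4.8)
The plan is to establish exactness at each of the three nontrivial spots in the sequence, working with the explicit description of Rosenlicht differentials and the combinatorial structure of the residue map. First I would analyze the map $\alpha$: a regular differential $\omega_v$ on each component $C_v$ has no poles, hence all its residues vanish, so $\mathrm{res}\circ\alpha=0$ and the image of $\alpha$ is contained in the kernel of $\mathrm{res}$. Injectivity of $\alpha$ is immediate, since a collection of differentials that is zero as a Rosenlicht differential is zero componentwise. The content of exactness at the middle term is the reverse inclusion: if $\omega$ has trivial residue class in $H_1(G,k)$, I must show that $\omega$ is actually regular on each $C_v$ (i.e.\ has no poles at the marked points). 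The residue class lives in $H_1(G,k)\subseteq C_1(G,k)$, and $\mathrm{res}(\omega)=0$ means the residue function $\vec e\mapsto \mathrm{res}_{\vec e}(\omega)$, viewed as a $1$-chain, is actually a boundary—but by the Rosenlicht condition (2) it is already a cycle (skew-symmetric under reversing orientation, since residues at the two preimages of a node sum to zero). A cycle that is a boundary in a graph must vanish, so all residues are zero and each $\omega_v$ is genuinely regular.

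The heart of the argument, and the step I expect to be the main obstacle, is surjectivity of $\mathrm{res}$ onto $H_1(G,k)$. Here the most efficient route is a dimension count rather than an explicit construction. By the short exactness I am trying to prove, the dimension of $H^0(\cC_k,\omega_{\cC_k/k})$ should equal $\sum_{v}g(C_v)+\dim_k H_1(G,k)=\sum_v g(C_v)+b_1(G)$, where $b_1(G)=|E(G)|-|V(G)|+1$ is the first Betti number of the dual graph. I would identify the left side with the arithmetic genus $p_a(\cC_k)$ by Grothendieck–Serre duality (using that $\dim_k H^0(\cC_k,\omega_{\cC_k/k})=p_a(\cC_k)=\dim_k H^1(\cC_k,\mathcal O_{\cC_k})$), and then invoke the standard genus formula for a nodal (strict normal crossings) curve, $p_a(\cC_k)=\sum_v g(C_v)+b_1(G)$, which follows from the normalization exact sequence $0\to \mathcal O_{\cC_k}\to \nu_*\mathcal O_{\widetilde{\cC}_k}\to \bigoplus_{x}k_x\to 0$ summed over the nodes. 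Comparing dimensions, once injectivity of $\alpha$ and exactness in the middle are known, forces $\mathrm{res}$ to be surjective by counting, so I can avoid constructing explicit differentials with prescribed residue cycles.

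The one subtlety to handle carefully is that the excerpt's setting allows $\cC_k$ to be \emph{non-reduced}, whereas the Rosenlicht-differential description and the identification with $H^0(\cC_k,\omega_{\cC_k/k})$ are being used on the reduced special fiber; I would note that throughout this subsection $\cC_k$ is assumed reduced (as stated at the start of the subsection introducing $G=G(\cC_k)$), so $\cC_k$ is a nodal curve in the classical sense and the normalization $\nu$ and the genus formula apply directly. With that in place, the remaining verifications—that $\mathrm{res}_{\vec e}(\omega)$ depends only on $\omega$ and the orientation, that reversing orientation negates the residue so that the residue $1$-chain is a cycle, and that the three maps are $k$-linear—are routine and follow immediately from the definitions of Rosenlicht differentials and of the chain and cycle groups of $G$. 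Assembling injectivity of $\alpha$, exactness in the middle via the ``cycle-that-is-a-boundary-vanishes'' observation, and surjectivity via the genus count completes the proof.
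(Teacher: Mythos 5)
Your proposal is correct and follows essentially the same route as the paper: the kernel of $\mathrm{res}$ is identified with $\bigoplus_{v\in V(G)} H^0(C_v,\omega_{C_v/k})$ directly from the definition of Rosenlicht differentials (since $H_1(G,k)$ is the cycle space of the graph and boundaries vanish, $\mathrm{res}(\omega)=0$ forces all residues to vanish), and surjectivity is obtained by the same dimension count $\dim_k H^0(\cC_k,\omega_{\cC_k/k})=g=\sum_{v}g(C_v)+\dim_k H_1(G,k)$, which you justify via the normalization exact sequence. One small correction: the residue $1$-chain is a cycle because of the residue theorem applied on each component $C_v$ (this is what the paper invokes), whereas the Rosenlicht condition (2) only ensures that the chain is well-defined, i.e.\ skew-symmetric under orientation reversal.
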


\begin{proof}
By the definition of Rosenlicht differentials and the residue map, the kernel of $\mathrm{res}$ is equal to  $\bigoplus_{v \in V(G)} H^0(C_v, \omega_{C_v / k})$.
Surjectivity of the residue map now follows by a dimension count, since $$\dim_k H^0(\cC_k,\omega_{\cC_k / k})=\dim_k H_1(G,k) + \sum_{v \in V(G)} \dim_k H^0(C_v, \omega_{C_v / k})=g.$$
\end{proof}

\begin{lemma}\label{lem:onedge}
Let $\omega$ be a regular canonical form on $C$ and let $v$ be a vertex of genus zero of $\Sk(\cC)$. Then $v$ belongs to $\Sk(C,\omega)$ if and only if some edge adjacent to $v$ belongs to $\Sk(C,\omega)$.
\end{lemma}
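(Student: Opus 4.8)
The plan is to treat the two implications separately. One direction is purely formal: by the definition of an $\omega$-essential edge recalled in \eqref{sss:KSsk}, any edge of $\Sk(\cC)$ that lies in $\Sk(C,\omega)$ has both of its endpoints $\omega$-essential, so if some edge adjacent to $v$ belongs to $\Sk(C,\omega)$ then $v$ is itself $\omega$-essential, i.e. $v\in\Sk(C,\omega)$. This implication uses neither the genus hypothesis nor the reducedness of $\cC_k$, and it is the converse that carries the content of the lemma.

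For the converse I would first normalize $\omega$. Replacing $\omega$ by $t^a\omega$ for a suitable integer $a$ translates $\wt_\omega$ by an additive constant and hence leaves $\Sk(C,\omega)$ unchanged, so I may assume that $\omega$ extends to a global section $s\in H^0(\cC,\omega_{\cC/R})$ that is not divisible by $t$. Since $\cC_k$ is reduced we have $\omega_{\cC/R}(\cC_{k,\red})\cong\omega_{\cC/R}$, so by \eqref{sss:rosenlicht} the restriction $\bar\omega:=s|_{\cC_k}$ is a nonzero Rosenlicht differential, with components $\omega_u:=\bar\omega|_{C_u}$. Under this normalization a vertex $u$ is $\omega$-essential precisely when $\nu_u=0$, equivalently when $\omega_u\neq0$.

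The heart of the argument is a local dictionary at a node. If $x$ is the node of $\cC_k$ corresponding to an edge $e$ with branches $E_v,E_w$, then $\omega_{\cC/R}$ is generated near $x$ by the logarithmic form $du/u=-dw/w$, where $uw=t$ cuts out the two branches; writing $s=f\cdot du/u$ locally one finds $\mathrm{res}_{y_v}(\omega_v)=f(x)$. I would then check that $f(x)\neq0$ is equivalent to $\nu_v=\nu_w=0$ together with $x$ not lying on the closure of $\mathrm{div}_C(\omega)$ --- that is, to $e$ being $\omega$-essential. For this last equivalence one uses that $\nu_v=\nu_w=0$ forces $\mathrm{div}(f)$ to be horizontal near $x$, so that membership of $x$ in it is detected by the vanishing of $f$ at $x$.

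With the dictionary established the conclusion is immediate. Since $v$ is $\omega$-essential, $\omega_v\neq0$; and since $g(v)=0$ the component $C_v$ is isomorphic to $\Pro^1$, whence $H^0(C_v,\omega_{C_v/k})=0$ and $\omega_v$ cannot be regular. As $\omega_v$ has at worst logarithmic poles supported at the preimages of nodes, it must have a genuine simple pole at some such point $y_v$, and a simple pole has nonzero residue, so $\mathrm{res}_{y_v}(\omega_v)\neq0$. By the dictionary the edge adjacent to $v$ corresponding to $y_v$ is $\omega$-essential, hence lies in $\Sk(C,\omega)$. I expect the one delicate point to be the local computation of $\mathrm{res}_{y_v}(\omega_v)$ and its matching with the combinatorial definition of an $\omega$-essential edge; the reduction to the residue theorem on $\Pro^1$ is then formal.
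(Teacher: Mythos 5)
Your proposal is correct, but it takes a genuinely different route from the paper's. The paper disposes of the converse implication in three lines of potential theory: writing $f=\wt_\omega|_{\Sk(\cC)}$, Corollary \ref{cor:bound} gives $\deg_v\Delta(f)\le \deg_v K_{\Sk(\cC)}=\mathrm{val}(v)-2$ at a genus-zero vertex (multiplicity one, since $\cC_k$ is reduced in this subsection), while at a minimum of $f$ all outgoing slopes are non-negative integers; hence not all of them can be $\ge 1$, so some outgoing slope vanishes, and concavity of $f$ along edges then forces the whole adjacent edge into the minimality locus. You instead argue algebraically: after normalizing $\omega$ to a section $s$ of $\omega_{\cC/R}$ not divisible by $t$, you use the Rosenlicht description of \eqref{sss:rosenlicht}, the local generator $du/u=-dw/w$ of $\omega_{\cC/R}$ at a node $uw=t$, and $H^0(\Pro^1,\omega_{\Pro^1/k})=0$ to produce a simple pole of $\omega_v$, hence a nonzero residue, hence an $\omega$-essential adjacent edge. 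Your steps check out: the lattice argument behind the normalization, the identification of $\omega$-essential vertices with components on which $\bar\omega=s|_{\cC_k}$ is nonzero (via $H^0(\cC,\omega_{\cC/R})\otimes k\cong H^0(\cC_k,\omega_{\cC_k/k})$), and the dictionary $f(x)\neq 0\Leftrightarrow e$ essential (every prime divisor of $\cC$ through the node is horizontal or one of the two branches, so vanishing vertical multiplicities together with a horizontal part avoiding $x$ make $f$ a unit at $x$). In effect you prove, inside the lemma, the statement that a face of $\Sk(\cC)$ lies in $\Sk(C,\omega)$ iff $\bar\omega$ generates $\omega_{\cC_k/k}$ there, which the paper only invokes later in the proof of Theorem \ref{thm:nonbridgeessentialsk}; your route therefore costs more (the dualizing-sheaf computation, the normalization, and reducedness of $\cC_k$, though the latter is a standing hypothesis here and is equally used by the paper's slope count) but buys more: it locates the essential adjacent edge at a pole of $\omega_v$ and pre-packages the residue analysis that Theorem \ref{thm:nonbridgeessentialsk} needs anyway, with no circularity. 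One cosmetic caveat: your ``$\nu_u=0$'' refers to multiplicities in the divisor of $s$ as a section of $\omega_{\cC/R}$, whereas the paper's $\nu_u$ is taken with respect to the logarithmic bundle $\omega_{\cC/R}(\cC_{k,\red})$, which shifts all vertical multiplicities by the same constant; since essentiality only asks which vertices achieve the minimum of $\nu_u/N_u$, the discrepancy is harmless, but you should fix the convention explicitly if this is written up.
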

\begin{proof} The ``if" part follows from the fact that $\Sk(C,\omega)$ is closed, so we only need to prove the converse implication.
 We denote by $f$ the restriction of $\wt_{\omega}$ to $\Sk(\cC)$. Assume that $v$ lies in $\Sk(C,\omega)$, that is, $f$ reaches its minimal value at $v$.
 By Corollary \ref{cor:bound}, the degree of $\Delta(f)$ at $v$ is strictly less than the valency of $v$ in $\Sk(\cC)$.
  Since $f$ has integer slopes, this means that at least one of the outgoing slopes of $f$ from $v$ must be zero, so that the corresponding edge also lies in $\Sk(C,\omega)$.
\end{proof}

\begin{theorem}\label{thm:nonbridgeessentialsk}
If $C$ is a $K$-curve of genus $g \geq 1$ whose minimal $snc$-model $\cC$ over $R$ is semi-stable, then the union $S=\bigcup_{\omega} \Sk(C,\omega)$, as $\omega$ runs through the set of non-zero global sections of $\omega_{C/K}$, is equal to the union of all the closed non-bridge edges and all the vertices of positive genus of $\Sk(\cC)$.
\end{theorem}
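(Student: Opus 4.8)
The plan is to analyze a single non-zero regular canonical form $\omega$ first, determine exactly which vertices and edges of $\Sk(\cC)$ lie in its Kontsevich--Soibelman skeleton $\Sk(C,\omega)$, and then take the union over all $\omega$. Since $\cC_k$ is reduced, every multiplicity $N_i$ equals $1$, and after rescaling $\omega$ by a power of the uniformizer (which shifts $\wt_\omega$ by a constant and hence leaves $\Sk(C,\omega)$ unchanged) I may assume its reduction $\bar\omega\in H^0(\cC_k,\omega_{\cC_k/k})$ is non-zero, so that $\min_i\nu_i=0$. By the description recalled in \eqref{sss:KSsk}, a vertex $v$ is then $\omega$-essential if and only if $\omega$ does not vanish identically along $C_v$, i.e. if and only if $\omega_v:=\bar\omega|_{C_v}\neq 0$. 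For the edges, the key local computation is to blow up the node $x$ corresponding to an edge $e$: writing $\omega=f\,\tfrac{du}{u}$ in the standard semi-stable coordinates $uv=t$, I would check that the exceptional component $E_0$ has multiplicity $2$ and that the order of vanishing of $\omega$ along $E_0$ is $\nu_{v_1}+\nu_{v_2}+d$, where $d$ is the order to which $\omega$ vanishes at $x$. A simple residue computation shows $d\geq 1$ if and only if $\mathrm{res}_{\vec e}(\bar\omega)=0$; feeding this into the blow-up description of $\wt_\omega$ (a special case of Theorem \ref{thm:laplacian}) shows that $\wt_\omega$ is affine and flat at its minimal value $0$ along $e$ precisely when $\mathrm{res}_{\vec e}(\bar\omega)\neq 0$, and otherwise develops a concave peak. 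This yields the clean statement that a closed edge $e$ is $\omega$-essential if and only if $\mathrm{res}_{\vec e}(\bar\omega)\neq 0$.

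Granting this dictionary, I would assemble the two inclusions as follows. The coefficient of $\vec e$ in the class $\mathrm{res}(\bar\omega)\in H_1(G,k)$ is exactly $\mathrm{res}_{\vec e}(\bar\omega)$, and $\mathrm{res}$ is surjective by Lemma \ref{lem:residue_ses}; hence an edge $e$ appears in some $\Sk(C,\omega)$ if and only if it has non-zero coefficient in some class of $H_1(G,k)$. It is then elementary that this happens exactly for the non-bridge edges: if $e$ is a bridge it disconnects $G$, and summing the relation $\partial\gamma=0$ over the vertices in one of the two resulting components forces the coefficient of $e$ to vanish in every $\gamma\in H_1(G,k)$, whereas a non-bridge lies on a simple cycle and so occurs with coefficient $\pm 1\neq 0$ in $k$. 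Thus the edges occurring in $S$ are precisely the closed non-bridge edges (the skeleta being closed, their endpoints are included). For the vertices, every positive-genus vertex $v$ carries a non-zero holomorphic differential on $C_v$, which the splitting injection $\alpha$ of Lemma \ref{lem:residue_ses} extends by zero to a Rosenlicht differential non-vanishing on $C_v$; lifting it to $\cC$ via cohomological flatness produces a canonical form placing $v$ in $S$. Conversely, a genus-zero vertex lying on no non-bridge edge cannot be in $S$: by Lemma \ref{lem:onedge}, if it lay in some $\Sk(C,\omega)$ then an adjacent edge would too, but such an edge is necessarily a non-bridge, a contradiction. Combining everything gives $S=\{\text{closed non-bridge edges}\}\cup\{\text{positive-genus vertices}\}$.

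I expect the main obstacle to be the local analysis at the nodes, that is, the equivalence between $\mathrm{res}_{\vec e}(\bar\omega)\neq 0$ and the $\omega$-essentiality of $e$. This requires carefully relating the residue of the Rosenlicht differential $\bar\omega$ at a branch of a node to the order of vanishing of $\omega$ at that node on the model, and then translating this via the blow-up into a statement about the slopes of $\wt_\omega$ (equivalently, about whether its restriction stays flat at the minimum or acquires a concave peak, as governed by Corollary \ref{cor:bound}). Once this correspondence between residues and the behaviour of $\wt_\omega$ is established, the remaining steps are either purely graph-theoretic — the cycle-space characterization of bridges — or immediate consequences of the exact sequence of Lemma \ref{lem:residue_ses} and of Lemma \ref{lem:onedge}.
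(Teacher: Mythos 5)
Your proposal is correct and follows essentially the same route as the paper's proof: normalize $\omega$ so that its reduction is a non-zero Rosenlicht differential, translate membership of vertices (resp.\ edges) of $\Sk(\cC)$ in $\Sk(C,\omega)$ into non-vanishing of $\bar\omega$ on the corresponding component (resp.\ non-vanishing of the residue at the corresponding node), and then assemble the two inclusions from the exact sequence of Lemma \ref{lem:residue_ses}, the residue theorem for bridges, and Lemma \ref{lem:onedge}, exactly as the paper does. The only differences are cosmetic: you verify the residue--edge dictionary by an explicit blow-up computation where the paper invokes the definition of the weight function and the base locus of $\omega_{\cC/R}(\cC_k)$ directly, and note that with the paper's log convention (the $\nu_i$ being computed with respect to $\omega_{\cC/R}(\cC_{k,\red})$) the normalization $\bar\omega\neq 0$ gives $\min_i\nu_i=1$ rather than $0$ --- a harmless additive shift, since $\Sk(C,\omega)$ depends only on the locus where the weight function is minimal.
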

\begin{proof}
Multiplying a non-zero canonical form $\omega$ with $a\in K^{\times}$ shifts the weight function $\wt_{\omega}$ by $v_K(a)$ and does not affect $\Sk(C,\omega)$. Moreover, since $\cC_k$ is reduced,
 $\wt_{\omega}$ takes integer values at the vertices of $\Sk(\cC)$. Thus in the definition of $S$, we only need to consider canonical forms $\omega$ whose minimal value on $\Sk(\cC)$ equals zero (recall from \eqref{sss:KSsk0} that this minimal value is always reached at a vertex). Now it is clear from the definition of the weight function that $\wt_{\omega}$ vanishes at an edge, resp.~vertex, of $\Sk(\cC)$, if and only if $\omega$ generates $\omega_{\cC/R}(\cC_{k})$ at the corresponding point of $\cC_k$, resp.~at the generic point of the corresponding irreducible component of $\cC_k$. Thus, in order to find the faces of $\Sk(\cC)$ that lie in $S$, we need to determine which singular points and irreducible components of $\cC_k$ lie in the base locus of $\omega_{\cC/R}(\cC_{k})\cong \omega_{\cC/R}$.
 For this aim, we can use Rosenlicht differentials: a point of $\cC_k$ lies in the base locus of $\omega_{\cC/R}$ if and only if it lies in the base locus of $\omega_{\cC_k/k}$ on $\cC_k$, by the surjectivity of  the reduction map $$H^0(\cC,\omega_{\cC / R}) \to H^0(\cC_k,\omega_{\cC_k/ k}).$$

 Using the morphism $\alpha$ in Lemma \ref{lem:residue_ses}, we can find an element of $H^0(\cC_k,\omega_{\cC_k/ k})$ that generates $\omega_{\cC_k/ k}$ at the generic point of every component of positive genus of $\cC_k$. In particular, all the vertices of positive genus of $\Sk(\cC)$ belong to $S$.
  By Lemma \ref{lem:onedge}, it now suffices to determine which edges of $\Sk(\cC)$ lie in $S$. The residue theorem immediately implies that a bridge never belongs to $S$, while the surjectivity of the residue map in Lemma \ref{lem:residue_ses} shows that every non-bridge edge lies in $S$. This concludes the proof.
\end{proof}
\begin{remark}
The essential skeleton $\Sk(C)$ is always connected, by Theorem \ref{thm:essentialsk}, but it is easy to use the proof of Theorem \ref{thm:nonbridgeessentialsk} to produce examples of a curve $C$ and a non-zero canonical form $\omega$ such that $\Sk(C,\omega)$ is disconnected (for instance, when $\Sk(\cC)$ is a chain with vertices of positive genus).
\end{remark}

\subsection{An alternate approach to computing the essential skeleton of a maximally degenerate semi-stable curve}

\sss Let $C$ be a $K$-curve of genus $g\geq 1$ and denote by $\cC$ its minimal $snc$-model. There is an elegant way to prove Theorems~\ref{thm:essentialsk}(2) and \ref{thm:nonbridgeessentialsk} using potential theory on metric graphs if we assume that $C$ is a {\em maximally degenerate} $K$-curve.
 This assumption is common in tropical geometry: it means that $\cC_k$ is reduced and that all the irreducible components of $\cC_k$ are rational curves. This implies that the metric graph $\Sk(\cC)$ still has genus $g$.
  The proofs yield some additional information about the structure of $\Sk(C,\omega)$ for certain explicit 2-canonical forms
$\omega$. They also have the advantage that they can be extended to the non-discretely valued setting (cf. Remark~\ref{rmk:nondiscretecase}).

 \sss For background on potential theory on metric graphs, see for instance \cite{baker-spec}. We recall that a {\em tropical rational function} on a metric graph $\Gamma$ is a real-valued continuous piecewise affine function on $\Gamma$ with integral slopes, and that the divisor of such a function is defined by $\mathrm{div}(f)=-\Delta(f)$. In other words, the degree of ${\rm div}(f)$ at a point of $\Gamma$ is the sum of the {\em incoming} slopes of $f$. Two divisors on $\Gamma$ are called {\em equivalent} if they differ by the divisor of a tropical rational function.
 We begin with a combinatorial lemma needed for our alternate proof of Theorem~\ref{thm:nonbridgeessentialsk}.


\begin{lemma} \label{lem:min_spanning_tree} Let $G$ be a discrete graph without loops and denote by $\Gamma$ the metric graph associated with $G$.
Let $T$ be a spanning tree of $\Gamma$, let $e$ be an edge of $\Gamma$ not contained in $T$, and let $Z(T,e)$ be the unique cycle in $T \cup e$.
Let $D$ be an effective divisor on $\Gamma$ which is equivalent to the canonical divisor $K_G$ and whose support contains a point $p_i$ from the relative interior of each edge $e_i \neq e$ contained in the complement of $T$.
Finally, let $f$ be a tropical rational function on $\Gamma$ with ${\rm div}(f) = D-K_G$. Then the locus of points $p \in \Gamma$ where $f$ achieves its minimum value is equal to $Z(T,e)$.
\end{lemma}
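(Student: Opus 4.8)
The plan is to normalize $\min_{\Gamma} f = 0$ (the minimum is attained, as $\Gamma$ is compact and $f$ continuous) and to pin down the minimum locus $S := f^{-1}(0)$ purely from the local structure of $f$, which is dictated by $\mathrm{div}(f) = D - K_G$. The first observation is that, since $K_G$ is supported on vertices, on the interior of every edge one has $\mathrm{div}(f) = D \ge 0$; because the degree of $\mathrm{div}(f)$ at a point is the sum of the incoming slopes, this means $\Delta(f)\le 0$ there, i.e. $f$ is \emph{concave} along each edge. Three consequences follow at once: the restriction of $f$ to any edge attains its minimum at an endpoint; $S$ meets each edge either in its endpoints or in the whole closed edge, so $S$ is a closed metric subgraph of $\Gamma$; and at any interior edge point $x$ with $D(x)>0$ we have $\mathrm{div}(f)(x)>0$, hence a negative outgoing slope at $x$, so $x\notin S$.

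Next I would cut $S$ down to the genus-one subgraph $\Gamma_1 := T\cup e$. For each complementary edge $e_i\ne e$ the marked point $p_i$ lies in $\mathrm{int}(e_i)$ with $D(p_i)\ge 1$, so by the previous paragraph $p_i\notin S$, and concavity on $e_i$ upgrades this to $S\cap\mathrm{int}(e_i)=\emptyset$ (a concave function with an interior minimum would be constant on the edge, reinstating $p_i$). Since the complement of the spanning tree $T$ consists exactly of the $g$ edges $e, e_1,\dots,e_{g-1}$, this shows $S\subseteq \Gamma_1$, a graph of first Betti number one whose unique cycle is $Z(T,e)$.

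The heart of the argument—and the step I expect to carry all the content—is a local valence estimate showing $S$ has no endpoints. Fix a vertex $v\in S$ and partition the edges of $\Gamma$ at $v$ into those contained in $S$ and those leaving $S$, writing $\partial(v)$ for the number of the latter. Along an edge inside $S$ the outgoing slope is $0$; along an edge leaving $S$ it must be $\ge 1$, for otherwise concavity would make $f$ nonincreasing into that edge, contradicting that $0=f(v)$ is the \emph{global} minimum. Hence $\Delta(f)(v)\ge \partial(v)$, and therefore
$$ D(v)-(\mathrm{val}(v)-2)\;=\;\mathrm{div}(f)(v)\;=\;-\Delta(f)(v)\;\le\;-\partial(v). $$
Writing $\mathrm{val}(v)=\mathrm{val}_S(v)+\partial(v)$ and using $D(v)\ge 0$, this rearranges to $\mathrm{val}_S(v)\ge 2+D(v)\ge 2$, so every vertex of $S$ has valence at least two inside $S$.

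Finally I would combine the two structural facts. A metric subgraph with no vertex of valence $\le 1$ must contain a cycle, and as $S\subseteq\Gamma_1$ whose only cycle is $Z(T,e)$, this forces $Z(T,e)\subseteq S$. Were the inclusion strict, $S$ would contain an edge of a tree branch of $\Gamma_1$ hanging off $Z(T,e)$; walking out along such a branch to a vertex of $S$ farthest from $Z(T,e)$ produces a valence-one vertex of $S$, contradicting the estimate. Hence $S=Z(T,e)$. The only delicate point is the valence estimate—everything else is bookkeeping with concavity—and within it the subtlety is the clean treatment of parallel edges together with the verification that an edge leaving $S$ has \emph{strictly} positive outgoing slope; this is precisely where it matters that $0$ is the global minimum rather than merely a local one.
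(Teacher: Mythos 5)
Your proposal is correct and takes essentially the same route as the paper's proof: both use $\mathrm{div}(f)=D-K_G$ to exclude the points $p_i$ from the minimum locus, and both establish that every point of the minimum locus has at least two tangent directions along which $f$ is constant (your estimate $\mathrm{val}_S(v)\ge 2+D(v)$ is exactly the paper's count that $f$ can be strictly increasing in at most $\mathrm{val}(p)-2$ directions, since each such direction has slope at least $1$ and the outgoing slopes sum to at most $\deg_p K_G - D(p)$), forcing the locus to be the unique cycle $Z(T,e)$ in $T\cup e$. Your concavity-on-edges and edge-dichotomy bookkeeping is just a more explicit packaging of the same local slope count, handling interior points that the paper treats uniformly by the same valence argument.
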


\begin{proof}
Let $\Sk(f)$ be the locus of $p \in \Gamma$ at which $f$ attains its minimum value.
For each $p \in \Sk(f)$, $f$ can be strictly increasing in at most ${\rm val}(p) - 2$ tangent directions, since it has slope at least $1$ in each such direction and non-negative slope in every other direction and the total sum of outgoing slopes of $f$ at $p$ is at most $$\mathrm{deg}_p K_G={\rm val}(p) - 2.$$
Thus there are at least two tangent directions at $p$ along which $f$ is constant.  It follows that every connected component of $\Sk(f)$ is a graph in which every vertex has valency at least $2$.
 However, $\Sk(f)$ cannot contain any of the points $p_i$, since the sum of the outgoing slopes of $f$ at $p_i$ is equal to $-\mathrm{deg}_{p_i}D<0$.
 Thus $\Sk(f) \subset T \cup e$, and the only possible cycle in $\Sk(f)$ is  $Z(T,e)$.
 Hence, $\Sk(f) = Z(T,e)$ as claimed.
\end{proof}

We obtain the following strengthening of Theorem~\ref{thm:nonbridgeessentialsk} in this context (it can also be deduced directly from Lemma \ref{lem:residue_ses} and the proof of Theorem \ref{thm:nonbridgeessentialsk}):

\begin{prop}\label{prop:pot1}
Assume that $C$ is maximally degenerate.
If $e$ is a non-bridge edge of $\Sk(\cC)$, then there exists a non-zero canonical form $\omega \in H^0(C,\omega_{C/K})$ such that $\Sk(C,\omega)$ is a simple cycle with $e$ in its support.
\end{prop}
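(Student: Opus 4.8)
The plan is to combine the residue description of regular canonical forms coming from Lemma~\ref{lem:residue_ses} with the purely combinatorial Lemma~\ref{lem:min_spanning_tree}. Since $e$ is a non-bridge edge it lies on a simple cycle $Z$ of $\Sk(\cC)$; I would fix such a $Z$ and choose a spanning tree $T$ of $\Sk(\cC)$ containing every edge of $Z$ except $e$, so that $Z=Z(T,e)$ is the fundamental cycle. Because $C$ is maximally degenerate, every component $C_v$ of $\cC_k$ is rational, so $H^0(C_v,\omega_{C_v/k})=0$ and the residue map of Lemma~\ref{lem:residue_ses} is an isomorphism $H^0(\cC_k,\omega_{\cC_k/k})\xrightarrow{\ \sim\ }H_1(G,k)$. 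I would take $\bar\omega$ to be the preimage of the homology class of $Z$, so that $\mathrm{res}_{\vec e_i}(\bar\omega)\neq 0$ precisely when $e_i$ is an edge of $Z$, and lift $\bar\omega$ to a non-zero regular canonical form $\omega\in H^0(C,\omega_{C/K})$ using the surjectivity of the reduction map $H^0(\cC,\omega_{\cC/R})\to H^0(\cC_k,\omega_{\cC_k/k})$ recalled in the proof of Theorem~\ref{thm:nonbridgeessentialsk}.

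Writing $f$ for the restriction of $\wt_\omega$ to $\Sk(\cC)$, Corollary~\ref{cor:bound} (with $m=1$, and with all vertices of multiplicity one and genus zero so that $K_{\Sk(\cC)}=K_G$) shows that $f$ is a tropical rational function with $\mathrm{div}(f)=D-K_G$, where $D=(\rho_{\cC})_*(\mathrm{div}_C(\omega))$ is effective and linearly equivalent to $K_G$; moreover $\Sk(C,\omega)$ is exactly the locus on $\Sk(\cC)$ where $f$ attains its minimum. To identify this locus with $Z(T,e)$ through Lemma~\ref{lem:min_spanning_tree}, the point to verify is that $D$ meets the relative interior of every non-tree edge different from $e$. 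The key local computation concerns a chord, i.e. a non-tree edge $e_i\neq e$ both of whose endpoints $v$ lie on $Z$: on such a component $\bar\omega_v$ has exactly the two simple poles coming from the two edges of $Z$ at $v$, hence no zeros on $\mathbb{P}^1$, and since its residue at the node of $e_i$ vanishes it must vanish there as a section of the dualizing sheaf; this forces a zero of $\omega$ to specialize to that node, placing a point of $D$ in the interior of $e_i$. When $Z$ can be chosen so that all non-tree edges other than $e$ are chords, Lemma~\ref{lem:min_spanning_tree} then gives $\Sk(C,\omega)=Z(T,e)=Z$, a simple cycle containing $e$.

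The main obstacle is exactly this verification that $D$ hits the interior of each non-tree edge $\neq e$, which is where the geometry of $\omega$ must be matched to the combinatorial hypothesis of Lemma~\ref{lem:min_spanning_tree}. The pole-counting argument above is clean for chords, but a non-tree edge with an endpoint off $Z$ need not receive a point of $D$, so one must either arrange the cycle $Z$ and the tree $T$ to avoid this situation or argue the minimum locus directly. The robust fallback, and the alternative indicated after the statement, is the direct residue computation: the vertices of $\Sk(C,\omega)$ are exactly the $v$ with $\bar\omega_v\neq 0$, namely the vertices of $Z$, and an edge lies in $\Sk(C,\omega)$ if and only if $\bar\omega$ has non-zero residue there, namely if and only if it is an edge of $Z$. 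This identifies $\Sk(C,\omega)=Z$ using only Lemma~\ref{lem:residue_ses} and the base-locus analysis from the proof of Theorem~\ref{thm:nonbridgeessentialsk}, with Lemma~\ref{lem:min_spanning_tree} supplying the conceptual potential-theoretic picture that $\Sk(C,\omega)$ is the unique cycle of $f$ surviving in $T\cup e$.
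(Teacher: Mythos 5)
Your fallback argument is correct and complete, and it is in fact the alternative deduction the paper itself flags parenthetically before the proposition (``it can also be deduced directly from Lemma~\ref{lem:residue_ses} and the proof of Theorem~\ref{thm:nonbridgeessentialsk}''), whereas the paper's own proof runs along your primary route and resolves the difficulty you identified in the opposite direction: instead of constructing $\omega$ first (by residues) and then trying to verify the hypothesis of Lemma~\ref{lem:min_spanning_tree} a posteriori, the paper \emph{forces} that hypothesis by construction. It picks type II points $p_i$ in the relative interiors of the non-tree edges other than $e$, passes to a tame Galois extension $K'/K$ so that the $p_i$ become vertices of $\Sk(\cC')$ and lift to a divisor $\widetilde{D}_0$ of degree $g-1$ on $C'=C\times_K K'$, uses $\dim_{K'} H^0(C',\omega_{C'/K'})=g$ to produce $\omega'$ vanishing along $\widetilde{D}_0$, so that $D=(\rho_{\cC'})_*(\mathrm{div}_{C'}(\omega'))\geq D_0$ meets every non-tree edge $\neq e$ by fiat and Lemma~\ref{lem:min_spanning_tree} applies; it then descends to $K$ by taking the trace of $\omega'$, checking via the Galois-fixed nodes that conjugate residues agree, so that $\Sk(C,\omega)=\Sk(C',\omega')$. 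The gap you name in your primary route is genuine: for your residue-constructed $\omega$, the reduction $\bar\omega$ vanishes identically on every component off $Z$ (a differential on $\mathbb{P}^1$ with at worst simple poles and vanishing residues is zero), so those vertices are not $\omega$-essential and $D$ need not meet non-tree edges with an endpoint off $Z$; Lemma~\ref{lem:min_spanning_tree} therefore cannot be applied to that $f$. But your residue fallback repairs this cleanly: the same vanishing observation shows the essential vertices are exactly those of $Z$, and the generation criterion from the proof of Theorem~\ref{thm:nonbridgeessentialsk} (an edge lies in $\Sk(C,\omega)$ if and only if $\bar\omega$ has non-zero residue at the corresponding node, given essential endpoints) yields $\Sk(C,\omega)=Z$ on the nose, with no base change or trace descent and with the cycle $Z$ prescribed in advance --- a genuine simplification. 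What the paper's divisor-lifting method buys in exchange is uniformity: it is insensitive to $m$ and so carries over verbatim to the bridge case via Lemma~\ref{lem:bridge_construction} and Proposition~\ref{prop:pot2}, where $2$-canonical forms are needed and no residue argument is available, and it extends to the non-discretely valued setting (cf.\ Remark~\ref{rmk:nondiscretecase}), while your argument is special to $m=1$.
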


\begin{proof}
Since $e$ is not a bridge, there exists a spanning tree $T$ of $\Sk(\cC)$ not containing $e$.  Let $e=e_0,e_1,\ldots,e_{g-1}$ be the edges of $\Sk(\cC)$ not contained in $T$, and
 choose a type II point $p_i$ in the relative interior of $e_i$ for every $i$ in $\{1,\ldots,g-1\}$ (type II points are the divisorial points in the terminology of \cite{MuNi}).
 We set $D_0= p_1+\cdots+p_{g-1}$. We would like to find a divisor $\widetilde{D}_0$ on $C$ such that $(\rho_{\cC})_*(\widetilde{D}_0)=D_0$. Unfortunately, this is not possible, since only the vertices of $\Sk(\cC)$ lift to $K$-rational points of $C$.

 This issue can be solved in the following way. Let $K'$ be a finite Galois extension of $K$ whose degree $n=[K':K]$ is not divisible by the characteristic of $k$. We denote by $R'$ the valuation ring of $K'$. Set $C'=C\times_K K'$ and let $\cC'$ be the minimal resolution of  $\cC\times_R R'$. Then it is well known, and easy to see, that $\cC'$ is the minimal $snc$-model of $C'$, and $\cC'_k$ is reduced. Moreover, the projection morphism $\pi:(C')^{\an}\to C^{\an}$ induces a homeomorphism $\Sk(\cC')=\pi^{-1}(\Sk(\cC))\to \Sk(\cC)$, and $\Sk(\cC')$ is obtained from $\Sk(\cC)$ by subdividing each edge into $n$ edges. Now we choose each point $p_i$ to be a vertex of $\Sk(\cC')$ in the relative interior of $e_i$. Then we can find a divisor $\widetilde{D}_0$ on $C'$ such that $(\rho_{\cC'})_*(\widetilde{D}_0)=D_0$.

 Since $H^0(C',\omega_{C'/K'})$ has dimension $g$ and $\widetilde{D}_0$ has degree $g-1$, there exists a non-zero $\omega' \in H^0(C',\omega_{C'/ K'}(-\widetilde{D}_0))$.
 Let $f$ be the restriction of ${\rm wt}_{\omega'}$ to $\Sk(\cC')$.
By Corollary~\ref{cor:bound}, we have
$$ {\rm div}(f) = (\rho_{\cC'})_*({\rm div}_{C'}(\omega')) - K_{\Sk(\cC')}.$$
 If we set $D=(\rho_{\cC'})_* ({\rm div}_{C'}(\omega'))$, then $D \geq D_0$ by construction.
 Now it follows from Lemma~\ref{lem:min_spanning_tree} that $\Sk(C',\omega')=\Sk(f)$ is a simple cycle that contains $e$.

 It remains to produce a non-zero element $\omega$ of $H^0(C,\omega_{C/K})$ such that $\Sk(C,\omega)=\Sk(C',\omega')$.
  Multiplying $\omega'$ with a suitable element of $(K')^{\times}$, we can assume that the minimal value of $\wt_{\omega'}$ on $\Sk(\cC')$ is equal to $0$.
  We denote by $\omega\in H^0(C,\omega_{C/K})$ the trace of $\omega'$ with respect to the Galois extension $K'/K$. Then it is easy to see 
   that $\wt_{\omega}=\wt_{\omega\otimes_K K'}\geq \wt_{\omega'}$ on $\Sk(\cC)$. 
  It is also clear that every singular point $x$ of $\cC'_k$ is fixed under the action of $\mathrm{Gal}(K'/K)$. 
  Thus the logarithmic residues at $x$ of the conjugates of $\omega'$ are all equal, and their sum is non-zero if and only if the logarithmic residue of $\omega'$ at $x$ is non-zero. It follows that an edge of $\Sk(\cC')$ lies in the zero locus of $\wt_{\omega'}$ if and only if it lies in the zero locus of $\wt_{\omega}$. Since $\Sk(C',\omega')$ is a union of edges, it follows that $\Sk(C,\omega)=\Sk(C',\omega')$.
\end{proof}

We now show that if $e$ is a bridge edge of $\Sk(\cC)$, then there exists a $2$-canonical form $\omega$ such that $\Sk(C,\omega)$ contains $e$, providing a new proof of Theorem~\ref{thm:essentialsk}(2) in the present context.

\begin{lemma} \label{lem:bridge_construction} Let $G$ be a discrete graph without loops and denote by $\Gamma$ the metric graph associated with $G$.
 We assume that $G$ has  no $1$-valent vertices.
 Choose any maximal chain $B$ of bridge edges  in $\Gamma$. We denote $v_1,v_2$ the endpoints of $B$.
 Let $T$ be a spanning tree in $\Gamma$. 
 Let $D$ be an effective divisor on $\Gamma$ equivalent to $2K_G$ satisfying the following properties:
\begin{enumerate}
\item\label{it:cond1} the support of $D$ contains a point from the relative interior of each edge contained in the complement of $T$;
\item\label{it:cond2} $D \geq K_G - (v_1)-(v_2)$.
\end{enumerate}
Finally, let $f$ be a tropical rational function on $\Gamma$ with ${\rm div}(f) = D - 2K_G$. Then the locus of points $p \in \Gamma$ where $f$ achieves its minimum value is equal to $B$.
\end{lemma}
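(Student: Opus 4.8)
The plan is to run the same potential-theoretic analysis as in Lemma~\ref{lem:min_spanning_tree}, but to organize the combinatorics around the \emph{leaves} of the minimum locus rather than around cycles, since here the expected answer $B$ is a tree. Write $\Sk(f)$ for the (nonempty, since $\Gamma$ is compact) locus where $f$ attains its minimum. I would first record that $\mathrm{div}(f)=D-2K_G$ gives $\Delta(f)=2K_G-D$, so that at any point $p$ the sum of the outgoing slopes of $f$ equals $\deg_p\Delta(f)=2\deg_pK_G-\deg_pD$, where $\deg_vK_G=\mathrm{val}(v)-2$ at a vertex and $\deg_pK_G=0$ in the interior of an edge. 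At a point of $\Sk(f)$ every outgoing slope is $\geq 0$; in particular, if $p$ lies in the relative interior of an edge and $\deg_pD>0$, the slopes cannot sum to the negative number $-\deg_pD$, so $p\notin\Sk(f)$. By condition~\eqref{it:cond1} this shows that $\Sk(f)$ misses an interior point of each non-tree edge, hence $\Sk(f)$ lies in $\Gamma$ minus one interior point from each non-tree edge, a space that contains no cycle; therefore $\Sk(f)$ is a forest.

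Next I would analyze the leaves of this forest. If $p$ is a valency-one point of $\Sk(f)$, then $f$ is constant in exactly one tangent direction and strictly increasing (slope $\geq 1$) in the other $\mathrm{val}(p)-1$ directions, whence $\mathrm{val}(p)-1\leq 2\deg_pK_G-\deg_pD$. In the interior of an edge this reads $1\leq-\deg_pD$, which is impossible, so every leaf sits at a vertex of $G$; at a vertex it becomes $\deg_vD\leq\mathrm{val}(v)-3$. This is exactly where condition~\eqref{it:cond2} does the work: since $D\geq K_G-(v_1)-(v_2)$, any vertex $v\notin\{v_1,v_2\}$ satisfies $\deg_vD\geq\mathrm{val}(v)-2>\mathrm{val}(v)-3$ and so cannot be a leaf, while at $v_1$ and $v_2$ the two inequalities force equality $\deg_{v_i}D=\mathrm{val}(v_i)-3$. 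A parallel computation with all $\mathrm{val}(p)$ directions increasing (giving $\deg_vD\leq\mathrm{val}(v)-4$) rules out isolated points of $\Sk(f)$. Thus the only possible leaves of the forest $\Sk(f)$ are $v_1$ and $v_2$.

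I would then assemble these facts. A nonempty forest with no isolated components and leaf set contained in the two-element set $\{v_1,v_2\}$ must be a single embedded arc with endpoints $v_1$ and $v_2$ (note $v_1\neq v_2$ because bridges never form a cycle). Since $\Sk(f)$ has no leaf in an edge interior, this arc traverses completely each edge it meets; and it cannot traverse a non-tree edge, for that would force it through the excluded marked point while it has no leaf there at which to stop and no way to backtrack. Hence $\Sk(f)$ is a path from $v_1$ to $v_2$ built entirely from edges of $T$. Because $B$ consists of bridges, $B\subseteq T$, and $B$ is itself a $v_1$--$v_2$ path in $T$; by uniqueness of paths in the tree $T$ we conclude $\Sk(f)=B$.

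The main obstacle, compared with Lemma~\ref{lem:min_spanning_tree}, is that passing from $K_G$ to $2K_G$ doubles the budget $2\deg_pK_G$ of admissible increasing directions, so the clean ``at least two constant directions at every minimum point'' dichotomy no longer holds and one cannot directly exhibit a cycle; the correct replacement is the leaf analysis above. The delicate point is recognizing that hypotheses~\eqref{it:cond1} and~\eqref{it:cond2} are calibrated precisely so that the slope bound $\deg_vD\leq\mathrm{val}(v)-3$ fails at every vertex except $v_1,v_2$ and holds with equality exactly there. Here the assumption that $G$ has no $1$-valent vertices enters in an essential way: together with maximality of $B$ it forces $\mathrm{val}(v_1),\mathrm{val}(v_2)\geq 3$ (each side of a bridge must contain a cycle, so a chain of bridges can only terminate at a branch point), which makes $K_G$ effective and renders the forced equality $\deg_{v_i}D=\mathrm{val}(v_i)-3\geq 0$ consistent with $D$ being effective; checking this compatibility is the part of the argument that needs the most care.
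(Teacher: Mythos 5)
Your proof is correct and takes essentially the same route as the paper's: both convert $\mathrm{div}(f)=D-2K_G$ into the bound $2\deg_p K_G-\deg_p D$ on the sum of outgoing slopes at a minimum point, use condition (2) to show that the minimum locus $\Sk(f)$ can have valency one (or zero) only at $v_1$ and $v_2$, and use condition (1) to exclude cycles, concluding $\Sk(f)=B$. The only difference is expository: your leaf-and-isolated-point analysis and the final assembly (a forest with no isolated points and leaves in $\{v_1,v_2\}$ is a single arc, which must consist of full edges of $T$ and hence is the unique tree path $B$) spell out in detail what the paper compresses into ``It follows that $\Sk(f)=B$.''
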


\begin{proof}
 Let $\Sk(f)$ be the locus of $p \in \Gamma$ at which $f$ attains its minimum value. We can argue in the same way as in the proof of Lemma~\ref{lem:min_spanning_tree}.
By condition \eqref{it:cond2}, for each $p \neq v_1,v_2$ in $\Sk(f)$ there are at least two tangent directions at $p$ along which $f$ is constant, and if $p\in \{v_1,v_2\}$ there is at least one such direction. Thus every connected component of $\Sk(f)$
 is a graph in which every vertex different from $v_1,v_2$ has valency at least two, and it cannot be equal to $\{v_1\}$ or $\{v_2\}$. On the other hand, by condition \eqref{it:cond1}, the set $\Sk(f)$ cannot contain any cycles. It follows that $\Sk(f)=B$.
\if false
Let $\Sk(f)$ be the locus of $p \in \Gamma$ at which $f$ attains its minimum value, and let $S_2$ be the locus of $p \in \Gamma_2 \cup \{ e \}$ at which
the restriction of $f$ to $\Gamma_2 \cup \{ e \}$ attains its minimum value.
By condition (2) and the proof of Lemma~\ref{lem:min_spanning_tree}, for each $p \neq v_1,v_2$ in $S_2$ there are at least two tangent directions at $p$ along which $f$ is constant, and if $p=v_2 \in S_2$ then there are at least three such directions.
Moreover condition (1) guarantees that $S_2$ contains a unique cycle $Z = Z(T,e_2)$.
Thus one of the following must hold:
\begin{enumerate}
\item[(a)] $S_2$ contains $Z$ and $f$ is constant at $v_2$ in the direction of $e$.
\item[(b)] $S_2 = \{ v_1 \}$ and $f$ is decreasing at $v_2$ in the direction of $e$.
\end{enumerate}

In addition, since $f$ has no poles along $e^\circ$, in case (a) we have either:
\begin{enumerate}
\item[(a1)] $f$ is constant along $e$; or
\item[(a2)] $f$ is increasing at $v_1$ in the direction of $e$
\end{enumerate}
and in case (b) $f$ must also be increasing at $v_1$ in the direction of $e$.

Let $S_1$ be the locus of $p \in \Gamma_1 \cup \{ e \}$ at which
the restriction of $f$ to $\Gamma_1 \cup \{ e \}$ attains its minimum value.
By conditions (1) and (2) and the proof of Lemma~\ref{lem:min_spanning_tree}, $S_1 \subset e$ and $f$ is constant in at least one tangent direction
at $v_1$.  The latter condition implies that $S_1 \neq \{ v_1 \}$ and thus (a2) and (b) cannot hold.
The only remaining possibility is (a1), in which case we have $S_1 = e$ and $S_2 = Z \cup e$.  Thus $\Sk(f) = Z \cup e$ as desired.
\fi
\end{proof}

\begin{prop}\label{prop:pot2}
 Assume that $C$ is maximally degenerate.
Let $B$ be any maximal chain of bridge edges of $\Sk(\cC)$. 
 Then there exists a non-zero $2$-canonical form $\omega \in H^0(C,\omega^{\otimes 2}_{C/K})$ such that $\Sk(C,\omega)=B$.
\end{prop}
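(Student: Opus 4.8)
The plan is to mirror the structure of the proof of Proposition~\ref{prop:pot1}, but using the $2$-canonical construction encoded in Lemma~\ref{lem:bridge_construction} in place of Lemma~\ref{lem:min_spanning_tree}. First I would set up the combinatorial data on the metric graph $\Gamma=\Sk(\cC)$: fix a spanning tree $T$ of $\Gamma$, and let $e_1,\ldots,e_g$ be the edges of $\Gamma$ not contained in $T$ (there are exactly $g$ of them since $C$ is maximally degenerate, so $\Gamma$ has genus $g$). I would then produce an effective divisor $D_0$ on $\Gamma$ realizing the hypotheses of Lemma~\ref{lem:bridge_construction}: namely $D_0$ should contain a point in the relative interior of each $e_i$ (giving condition~\eqref{it:cond1}) and enough additional mass concentrated at vertices of $B$ so that $D_0\geq K_G-(v_1)-(v_2)$ after we arrange for $D$ to be the $(\rho_{\cC'})_*$-pushforward of a canonical divisor on $C'$ (condition~\eqref{it:cond2}). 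Since $2K_G$ has degree $2(2g-2)$, there is ample room to satisfy both conditions; the genus-$g$ space $H^0(C',\omega^{\otimes 2}_{C'/K'})$ of $2$-canonical forms has dimension $3g-3$ (for $g\geq 2$) or can be treated directly for $g=1$, so a suitable section with prescribed vanishing exists by a dimension count.

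The key difficulty, as in the proof of Proposition~\ref{prop:pot1}, is that the prescribed points $p_i$ in the relative interiors of the edges $e_i$ need not lift to $K$-rational points of $C$, since only the vertices of $\Sk(\cC)$ arise as specializations of $K$-points. I would resolve this exactly as before: pass to a finite Galois extension $K'/K$ of degree $n$ prime to $\mathrm{char}(k)$, with valuation ring $R'$, set $C'=C\times_K K'$, and take $\cC'$ to be the minimal resolution of $\cC\times_R R'$. Then $\cC'$ is the minimal $snc$-model of the maximally degenerate curve $C'$, its special fiber is reduced, and $\Sk(\cC')$ is obtained from $\Sk(\cC)$ by subdividing each edge into $n$ segments, with $\pi\colon (C')^{\an}\to C^{\an}$ inducing a homeomorphism $\Sk(\cC')\to\Sk(\cC)$ carrying $B$ to $B$. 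Choosing $n$ large enough, I can arrange all the required points $p_i$ (and any additional support points for condition~\eqref{it:cond2}) to be vertices of $\Sk(\cC')$, hence specializations of $K'$-points, so that a divisor $\widetilde{D}_0$ on $C'$ with $(\rho_{\cC'})_*(\widetilde{D}_0)=D_0$ exists.

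With this in place, I would select a non-zero $\omega'\in H^0(C',\omega^{\otimes 2}_{C'/K'}(-\widetilde{D}_0))$ and let $f$ be the restriction of $\wt_{\omega'}$ to $\Sk(\cC')$. Corollary~\ref{cor:bound} gives $\mathrm{div}(f)=(\rho_{\cC'})_*(\mathrm{div}_{C'}(\omega'))-2K_{\Sk(\cC')}$, and setting $D=(\rho_{\cC'})_*(\mathrm{div}_{C'}(\omega'))$ we have $D\geq D_0$, so both hypotheses of Lemma~\ref{lem:bridge_construction} hold. (Here I must verify that the maximal bridge chain $B$ in $\Sk(\cC)$ corresponds to a maximal bridge chain in $\Sk(\cC')$; this is immediate since subdivision preserves the bridge/non-bridge dichotomy and introduces no new cycles, and since $\Gamma$ has no $1$-valent vertices because $g(C)\geq 1$ forbids rational tails in the minimal $snc$-model of a semi-stable curve, as noted in \eqref{sss:tails}.) Lemma~\ref{lem:bridge_construction} then yields $\Sk(C',\omega')=\Sk(f)=B$.

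Finally I would descend from $K'$ back to $K$, following the trace argument at the end of the proof of Proposition~\ref{prop:pot1} verbatim. After rescaling $\omega'$ by an element of $(K')^{\times}$ so that the minimal value of $\wt_{\omega'}$ on $\Sk(\cC')$ is $0$, I take $\omega\in H^0(C,\omega^{\otimes 2}_{C/K})$ to be the trace of $\omega'$ under $\mathrm{Gal}(K'/K)$; then $\wt_{\omega}=\wt_{\omega\otimes_K K'}\geq \wt_{\omega'}$ on $\Sk(\cC)$, every singular point of $\cC'_k$ is Galois-fixed, and comparing logarithmic residues shows that an edge lies in the zero locus of $\wt_{\omega'}$ precisely when it lies in the zero locus of $\wt_{\omega}$. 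Since $\Sk(C',\omega')=B$ is a union of edges, this forces $\Sk(C,\omega)=B$. The main obstacle I anticipate is not the descent (which is routine once Proposition~\ref{prop:pot1} is in hand) but rather arranging the divisor $D_0$ so that \emph{both} combinatorial conditions of Lemma~\ref{lem:bridge_construction} are met simultaneously while keeping $\deg D_0\leq \deg(2K_G)$ and ensuring the corresponding linear system of $2$-canonical forms is non-empty; this is where I would spend the most care in tracking degrees, and where a separate treatment of the low-genus cases $g=1$ (where the bridge chain, if any, must be handled with the $2$-canonical system directly) may be required.
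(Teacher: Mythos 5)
Your proposal is correct and follows the paper's own proof essentially verbatim: the same base change to $K'$ with $\cC'$ the minimal resolution of $\cC\times_R R'$, the same choice of an effective divisor $\widetilde{D}_0$ on $C'$ pushing forward to a $D_0$ satisfying conditions \eqref{it:cond1} and \eqref{it:cond2} of Lemma~\ref{lem:bridge_construction}, the same dimension count in $H^0(C',\omega^{\otimes 2}_{C'/K'})$, and the same trace descent as in Proposition~\ref{prop:pot1}. The two points you flag as the main anticipated difficulties are dispatched exactly as you would hope: the paper takes $\deg \widetilde{D}_0 = 3g-4 = g+(2g-4) < 3g-3 = \dim_{K'} H^0(C',\omega^{\otimes 2}_{C'/K'})$, so the required section exists, and the case $g=1$ is vacuous because then $\Sk(\cC)$ is a cycle and contains no bridges.
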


\begin{proof} We can assume that $g\geq 2$ since in the genus one case, $\Sk(\cC)$ is a cycle and does not contain any bridges.
 Since $\cC$ is the minimal $snc$-model of $C$ and $\cC_k$ is reduced, $\Sk(\cC)$ has no $1$-valent vertices. We set $\Gamma=\Sk(\cC)$. We choose a spanning tree $T$ of $\Gamma$.
  We define $K'$, $C'$ and $\cC'$ as in the proof of Proposition \ref{prop:pot1}. Then, by the same arguments as in that proof, it suffices to find a non-zero element $\omega' \in H^0(C',\omega^{\otimes 2}_{C'/K'})$ such that $\Sk(C',\omega')=B$.
 We can find an effective divisor $\widetilde{D}_0$ on $C'$ of degree $3g-4=g+(2g-4)$ over $K'$ such that $D_0=(\rho_{\cC'})_*(\widetilde{D}_0)$ satisfies properties \eqref{it:cond1} and \eqref{it:cond2} from the statement of Lemma \ref{lem:bridge_construction}.
 Since the space $H^0(C',\omega^{\otimes 2}_{C' / K'})$ has dimension $3g-3$ by Riemann-Roch, there exists a
non-zero $2$-canonical form $\omega' \in H^0(C,\omega^{\otimes 2}_{C'/K'})$ with ${\rm div}_{C'}(\omega') \geq \widetilde{D}_0$.
 We set $D=(\rho_{\cC'})_*({\rm div}_{C'}(\omega'))$. Then $D\geq D_0$ by construction.
Let $f$ be the restriction of ${\rm wt}_{\omega'}$ to $\Gamma$.
By Theorem~\ref{thm:laplacian}, we have
$$ {\rm div}(f) = D - 2K_{\Gamma}.$$
 The result now follows from Lemma~\ref{lem:bridge_construction}.
\end{proof}

\section{Appendix: the stable metric on $\mathbb{H}_0(C)$}\label{sec:appendix}
\subsection{Definition of the stable metric}
\sss Let $C$ be a $K$-curve. The metric on $\mathbb{H}_0(C)$ defined in Theorem \ref{thm:metric} was well-suited for the description of the Laplacian of the weight function in Theorem \ref{thm:laplacian}, but it does not behave well under extensions of the base field $K$. We will now define an alternative metric on $\mathbb{H}_0(C)$, which we call the {\em stable} metric, which has better properties with respect to base change. In particular, if $k$ has characteristic zero, one can compare it to the skeletal metric from \cite{BPR} (see Proposition \ref{prop:skeletal-metric}).

\sss We first put a metric on the geometric realization $\Gamma$ of a weighted discrete graph $(G,w)$ by replacing the formula in
\eqref{def:metric} by
$$\ell(e)=\frac{1}{\lcm\{w(v_1),w(v_2)\}}.$$
 Now the same arguments as in Section \ref{ss:def-metric} show that this definition induces a unique metric
 on $\mathbb{H}_0(C)$ such that, for every $snc$-model $\cC$ of $C$, the embedding
 $$\Gamma(\cC_k)\to \mathbb{H}_0(C)$$ is an isometry onto $\Sk(\cC)$. We call this metric the  stable metric on $\mathbb{H}_0(C)$. Note that, if $\cC_k$ is reduced, the stable metric on $\Sk(\cC)$ coincides with the one defined in Theorem \ref{thm:metric}.

\sss \label{sss:Zaff}  By \cite[\S3.2]{MuNi}, the skeleton $\Sk(\cC)$ of an $nc$-model $\cC$ of $C$ carries a
natural $\Z$-affine structure. If $e$ is an edge of $\Sk(\cC)$
with endpoints $v_1$ and $v_2$, then a $\Z$-affine function
$$f:e\setminus\{v_1,v_2\}\to \R$$ is a function of the form
$$(x_1,x_2)\mapsto ax_1/N_1+bx_2/N_2+c$$ where $a,b,c$ are integers, $N_1=w(v_1)$, $N_2=w(v_2)$ and
$x_1$ and $x_2=1-x_1$ are barycentric coordinates on $e\setminus
\{v_1,v_2\}\cong \,]0,1[$ such that the limit of $x_1$ at $v_1$ is
$1$ and the limit of $x_2$ at $v_2$ is $1$ (beware that we are not
excluding the possibility $v_1=v_2$). This definition is motivated
by the following fact: if $h\neq 0$ is a rational function on $C$,
then
$$\Sk(\cC)\to \R:x\mapsto -\ln |h(x)|$$ is continuous and piecewise $\Z$-affine,
 and this function is affine on an edge $e$ if and only if the point of $\cC_k$ corresponding to $e$ does not belong to the horizontal part of the divisor $\mathrm{div}_{\cC}(h)$ on $\cC$ (see \cite[3.2.2]{MuNi}).
 Moreover, if $e$ is an edge of $\Sk(\cC)$ that is not a loop, then every $\Z$-affine function on $e\setminus
\{v_1,v_2\}$ can be written as $$x\mapsto -\ln |h(x)|$$
 for some rational function $h\neq 0$ on $C$ (simply consider a monomial with suitable integer exponents in the local
 equations for the components corresponding to the vertices adjacent to $e$).

\sss The $\Z$-affine structure on $\Sk(\cC)$ induces
the stable metric on $\Sk(\cC)=\Gamma(\cC_k)$, in the following sense: the length of
$e$ is equal to
$$\inf_{f}\{\,|\lim_{0} f-\lim_{1} f|\,\}$$
where $f$ runs through the set of injective $\Z$-affine functions
$$f:e\setminus\{v_1,v_2\}\to \R,$$
 and where $\lim_{i}f$ denotes the limit of $f$ at $i$ for $i=0,1$, where we choose any homeomorphism to identify  $e\setminus \{v_1,v_2\}$ with the open interval $]0,1[$.

To see this, note that this
infimum is equal to the smallest positive element of the set
$$\{a/N_1-b/N_2\,|\,a,b\in \Z\}$$ which is precisely
$$\frac{\gcd(N_1,N_2)}{N_1N_2}=\frac{1}{\lcm(N_1,N_2)}.$$ Thus our definition of the
length of $e$ is the unique one such that the affine functions on
$e\setminus \{v_1,v_2\}$ are precisely the differentiable
functions with constant integer slope whose value at $v_1$ is a
multiple of $1/N_1$.

\subsection{Comparison with the skeletal metric}
\sss The set $$\mathbb{H}_0(C \times_K \widehat{K^a})=(C \times_K \widehat{K^a})^{\an}\setminus \{\mbox{points of type I and IV}\}$$ carries a
natural metric, which was called the {\em skeletal metric} in
\cite{BPR}. Its construction is described in detail in
\cite[\S{5.3}]{BPR}. We will now compare it to the metric that we
defined on $\mathbb{H}_0(C)$, in the case where $k$ has characteristic zero.

\sss  Let $C$ be a $K$-curve and let $\cC$ be an $snc$-model for
$C$. An irreducible component of $\cC_k$ is called {\em principal} if it has positive genus or it is a rational curve that intersects the rest of $\cC_k$ in at least three points. A principal vertex of $\Sk(\cC)$ is a vertex corresponding to a principal component in $\cC_k$.

\begin{prop}\label{prop:skeletal-metric}
Assume that $k$ has characteristic zero.
 Let $C$ be a $K$-curve and let $\cC$ be an $snc$-model of $C$.
  Denote by $\pi$ the canonical projection $C\times_K \widehat{K^a}\to C$.
 Then the corestriction $$\pi_{\cC}:\pi^{-1}(\Sk(\cC))\to \Sk(\cC)$$ of $\pi$ to $\Sk(\cC)$ is a local isometry over the complement of the principal vertex set of $\Sk(\cC)$. Moreover, if $\cC$ is semi-stable, then $\pi_{\cC}$ is an isometry.
\end{prop}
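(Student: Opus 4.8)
The plan is to reduce the statement to a purely local analysis along the skeleton, treating the interiors of edges and the non-principal vertices separately, and to compare the two metrics by combining the annulus description of the skeletal metric from \cite[\S5.3]{BPR} with the description of the stable metric in \ref{sss:Zaff}. Throughout I would use that $k$ has characteristic zero, so that every base change to $\widehat{K^a}$ and every cover below is tame and roots of the uniformizer $t$ may be freely extracted. Since both metrics are defined so that the relevant skeleta embed isometrically, and $\pi$ sends $\pi^{-1}(\Sk(\cC))$ into $\HH_0(C\times_K\widehat{K^a})$, it suffices to produce, around every point $y$ with $\pi_{\cC}(y)$ not a principal vertex, a neighbourhood on which $\pi_{\cC}$ is an isometry onto its image.

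First I would handle the interior of an edge $e$ of $\Sk(\cC)$. Such an edge corresponds to a node $x$ of $\cC_k$ at which two components of multiplicities $N_1$ and $N_2$ meet, so that in suitable formal coordinates $(u,v)$ at $x$ one has $t=\varpi\,u^{N_1}v^{N_2}$ for a unit $\varpi$. Writing $d=\gcd(N_1,N_2)$ and extracting a $d$-th root $\tau$ of $t$ over $\widehat{K^a}$, the equation factors as $(u^{N_1/d}v^{N_2/d}/\tau)^d=\varpi^{-1}$, and by tameness the preimage under $\pi$ of the open annulus attached to $e$ becomes a disjoint union of $d$ open annuli indexed by the $d$-th roots of unity. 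A monomial change of coordinates built from a Bézout relation for $N_1/d$ and $N_2/d$ identifies each of these with a standard annulus, and a direct computation of the range of the valuation of the annulus coordinate gives modulus $1/\lcm(N_1,N_2)$. By \cite[\S5.3]{BPR} the skeletal length of each such annulus is then $1/\lcm(N_1,N_2)$, which is exactly the stable length of $e$ by \ref{sss:Zaff}; hence $\pi_{\cC}$ restricts to an isometry on each of the $d$ sheets lying over the interior of $e$.

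Next I would treat a non-principal vertex $v$, corresponding to a rational component $E$ of multiplicity $N$ meeting the rest of $\cC_k$ in at most two points, so that $v$ has valency at most two in $\Sk(\cC)$. The valency-one case is immediate, since $v$ is then an endpoint of the graph. In the valency-two case, with neighbouring multiplicities $N_-$ and $N_+$, the relation $\cC_k\cdot E=0$ (valid since $\cC_k$ is a fibre) together with the transversality coming from the snc condition forces $N_-+N_+\equiv 0 \pmod N$, whence $\gcd(N,N_-)=\gcd(N,N_+)=:d$; thus the numbers of sheets lying over the two adjacent edges agree. The point is then to show that the tame cover of $E^{\circ}\cong\G_m$ induced by $\pi$ splits into $d$ disjoint copies of $\G_m$, so that the preimage of a neighbourhood of $v$ is a disjoint union of $d$ pieces, each carrying exactly one sheet from each side and mapping isometrically onto the neighbourhood of $v$; this yields the desired local isometry at $v$. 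Finally, when $\cC$ is semi-stable all multiplicities equal one, so $d=1$ on every edge and at every vertex: there is a single sheet everywhere, $\pi_{\cC}$ is injective, and the local isometries of the previous steps assemble into a global isometry $\pi^{-1}(\Sk(\cC))\to\Sk(\cC)$.

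The main obstacle is the analysis at the non-principal vertices in the third step: one must control the tame cover of $E^{\circ}$ precisely enough to see that it is totally split into $d$ trivial sheets — equivalently, that the cyclic cover obtained by extracting roots of the restriction $\overline{t}_E$ of $t$ to $E$ is disconnected — and then verify that the sheets over the two adjacent edges glue correctly through each point of $\pi^{-1}(v)$. This is exactly the feature that can fail at a principal vertex, where the induced cover of $E^{\circ}$ may be connected and produce a ramification point over $v$, which is why such vertices must be excluded. A secondary point demanding care is the precise matching of the normalisation of the annulus modulus in the second step with the skeletal metric of \cite{BPR}.
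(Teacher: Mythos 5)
Your proposal cannot coincide with the paper's argument for the simple reason that the paper does not give one: the authors explicitly omit the proof, deferring to the structure theory of $snc$-models under tame base change in Sections 1 and 4 of Chapter 3 of \cite{HaNi-mem}. Your sketch is in effect a self-contained reconstruction of exactly the local computations that those cited results package: the edge analysis via the equation $t=\varpi u^{N_1}v^{N_2}$ is correct (the preimage of the open annulus attached to an edge is a disjoint union of $\gcd(N_1,N_2)$ annuli, and the lattice computation with a B\'ezout relation does give modulus $\gcd(N_1,N_2)/(N_1N_2)=1/\lcm(N_1,N_2)$, matching the stable length), and the congruence $N_-\equiv -N_+ \pmod N$ from $\cC_k\cdot E=0$ correctly matches the sheet counts $d=\gcd(N,N_-)=\gcd(N,N_+)$ on the two sides of a valency-two vertex. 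What your route buys is transparency and independence from \cite{HaNi-mem}; what the citation buys the authors is that the bookkeeping (including the monotonicity of the induced maps of skeleta and the identification of $\pi^{-1}(\Sk(\cC))$ with the skeleton of the base-changed model) is done once and for all there.

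There is, however, one concrete misconception in the step you yourself flag as the main obstacle. You say one must show that the cover of $E^{\circ}\cong \G_m$ is ``totally split into $d$ trivial sheets --- equivalently, that the cyclic cover \ldots is disconnected.'' Neither formulation is what is needed, and the first is false in general: the Kummer cover has class $z^{N_-}$ modulo $N$-th powers (using that $k$ is algebraically closed, so $k^{\times}$ is divisible), hence it has $d=\gcd(N,N_-)$ connected components, each mapping to $E^{\circ}$ with degree $N/d$; these are not trivial sheets unless $N\mid N_-$, and when $d=1$ the cover is connected yet the local isometry still holds. The property that actually makes the argument work is that each connected component, being itself a Kummer cover $s\mapsto s^{N/d}$ of $\G_m$, is \emph{totally ramified} over each of the two punctures, so each component carries exactly one point over each node; combined with the equality of the counts $d$ on both sides, this gives the one-sheet-from-each-side gluing through every point of $\pi^{-1}(v)$, and it is precisely this total-ramification property (not disconnectedness) that can fail at a principal vertex, e.g.\ for $E$ of multiplicity $2$ with neighbour multiplicities $(2,1,1)$, where the connected cover $w^2=z^2(z-1)$ has two points over the node at $z=0$ and two annuli upstairs fold onto one edge downstairs. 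With that repair your vertex step is sound (the valency-one case follows the same pattern with $E^{\circ}\cong \A^1$, whose units force the cover to split completely). Two smaller points: in the semi-stable case your assembly argument should note that the principal vertices are also covered, trivially, since all multiplicities are $1$ and the cover is of degree one; and to promote the bijective local isometry to an isometry one should invoke that skeleta are geodesically convex in $\HH_0$, so that the restricted skeletal metric is the path metric of the subgraph.
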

\begin{proof}
This can be deduced in a rather straightforward way from the results in Sections 1 and 4 of Chapter 3 in
 \cite{HaNi-mem}. Since the arguments are somewhat tedious and the result is not needed in this paper, we omit the proof.
\end{proof}

\end{document}